\definecolor{main}{HTML}{5989cf}
\definecolor{sub}{HTML}{cde4ff} 
\newtcolorbox{boxE}{
    enhanced, 
    boxrule = 0pt, 
    borderline = {0.75pt}{0pt}{main}, 
    borderline = {0.75pt}{2pt}{sub} 
}
\newtheorem*{introthm}{Theorem}
\newtheorem{theorem}{Theorem}[section]
\newtheorem{lemma}[theorem]{Lemma}
\newtheorem{proposition}[theorem]{Proposition}
\newtheorem{corollary}[theorem]{Corollary}
\theoremstyle{definition}
\newtheorem{remark}[theorem]{Remark}
\def\cc{{\mathbb C}}
\def\pp{{\mathbb P}}
\def\Div{\operatorname{Div}}
\def\Cl{\operatorname{Cl}}
\def\Pic{\operatorname{Pic}}
\def\Ext{\operatorname{Ext}}
\def\BNef{\operatorname{BNef}}
\def\Bs{\operatorname{Bs}}
\renewcommand{\div}{\operatorname{div}}
\begin{document}

\title{Cox rings of nef anticanonical rational surfaces}

\author{Michela Artebani}
\address{
Departamento de Matem\'atica, \newline
Universidad de Concepci\'on, \newline
Casilla 160-C,
Concepci\'on, Chile}
\email{martebani@udec.cl}

\author{Sofía Pérez Garbayo}
\address{
Departamento de Matem\'atica, \newline
Universidad de Concepci\'on, \newline
Casilla 160-C,
Concepci\'on, Chile}
\email{sperez2017@udec.cl}

\subjclass[2020]{14J26, 14C20}
\keywords{Cox rings, rational elliptic surface, weak del Pezzo surface} 
\thanks{The authors have been partially 
supported by Proyecto FONDECYT Regular 
N. 1211708}

\begin{abstract}
This paper deals with the problem of computing a generating set for the Cox ring $R(X)$
of a smooth projective rational  surface $X$ with nef anticanonical class. 
In case $R(X)$ is finitely generated, we show that the degrees of its generators 
are either classes of negative curves, elements of the Hilbert basis of the nef cone 
or certain ample classes of anticanonical degree one, which only appear when $X$ is a rational elliptic surface of Halphen index $m>2$. Moreover, we partially characterize which elements of the Hilbert basis of the nef cone are irredundant for generating $R(X)$.
We apply this result to compute explicit minimal generating sets for Cox rings of some rational 
elliptic surfaces of Halphen index $>1$.
\end{abstract}
\maketitle

\section*{Introduction}
Let $X$ be a smooth projective surface over the complex numbers with finitely generated and torsion free divisor class group $\Cl(X)$.
The Cox ring of $X$ is the $\cc$-algebra graded by $\Cl(X)$ defined as follows (see \cite{ADHL}):
\[
R(X):=\bigoplus _{[D]\in \Cl(X)}\Gamma(X,\mathcal O_X(D)).
\]
In case $R(X)$ is finitely generated the surface $X$ can be obtained 
as a quotient $p_X:\hat X\to X$ of a big open subset $\hat X$ of the affine variety associated to $R(X)$ 
by the action of a torus. 
The map $p_X$ is a universal torsor over $X$, an object which has been introduced to study rational points of surfaces defined over number fields \cite{ADHL}*{\S 6.1.2}.
In the case of smooth surfaces with finitely generated Cox ring, 
the graded algebra $R(X)$ determines $X$ up to isomorphism.
For these reasons the Cox ring is a fundamental algebraic invariant associated to $X$.
 
The classification of surfaces with finitely generated Cox ring is an open problem.
We know that this property is related to the positivity of the anticanonical class of $X$, for example 
$R(X)$ is finitely generated when $-K_X$ is big \cite{TVA} or when a positive multiple of $-K_X$ defines 
an elliptic fibration with a finite number of $(-1)$-curves \cite{ArLa}.
In \cite{BP} the authors proved that the Cox ring of a del Pezzo surface with $\rho(X)\geq 3$ is generated by defining sections of $(-1)$-curves 
together with a basis of anticanonical sections when $\rho(X)=9$. In \cites{DD,DDD} Derenthal provided an inductive method to compute Cox rings 
of weak del Pezzo surfaces and computed the Cox ring when the degree is $\geq 3$ and the Cox ring has a unique relation.

In \cite{AGL} it is proved that the Cox ring of a jacobian extremal rational elliptic surface is generated by defining sections of smooth rational curves, conic bundles, twisted cubics and anticanonical sections.

The purpose of this paper is to generalize both results providing a general approach 
for the computation of a generating set of $R(X)$ for smooth projective rational surfaces with nef anticanonical class. This approach is based on the idea that the Hilbert basis 
of the nef cone has a key role. 
This leads to the following result (see Theorem \ref{main1} and Theorem \ref{main2}).

\begin{introthm}
Let $X$ be a smooth projective rational surface such that $-K_X$ is nef and whose Cox ring $R(X)$ is finitely generated. 
The degrees of a minimal homogeneous generating set of $R(X)$ belong to the following list:
\begin{enumerate}
\item classes of curves with negative self-intersection,
\item  elements of the Hilbert basis of the nef cone,
\item ample classes of the form $-\alpha K_X+E$, where $2\leq \alpha< m$, $E$ is the class of a
$(-1)$-curve  and $X$ is an elliptic surface of index $m>1$.
\end{enumerate}
\end{introthm}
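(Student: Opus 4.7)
The plan is to analyze the degree of a minimal generator via the geometry of the cones in $\Cl(X)_{\mathbb{R}}$. Since $R(X)$ is finitely generated, $X$ is a Mori dream surface, so $\Eff(X)$ is polyhedral and every effective class $D$ admits a Zariski decomposition $D = P + N$ with $P$ nef and $N = \sum a_i C_i$ supported on curves of negative self-intersection. I would use this first to split off case (1), and then study nef generator degrees via multiplication of sections together with Kawamata-Viehweg vanishing, in order to force them into either the Hilbert basis of $\Nef(X)$ or the exceptional form of case (3).

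Let $s \in R(X)_D$ be a nonzero element of a minimal homogeneous generating set, so $D$ is effective. Since $N$ is the fixed part of $|D|$, every section of $\mathcal{O}_X(D)$ factors as $\prod s_{C_i}^{a_i} \cdot s'$ with $s' \in H^0(X, \mathcal{O}_X(P))$, where $s_{C_i}$ denotes the canonical section of the negative curve $C_i$. Minimality of $s$ then forces either $N = 0$, so $D$ is nef, or $P = 0$ and $D = C_j$ for a single $j$, placing $D$ in case (1).

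Assume next that $D$ is nef but not a negative curve class. If $D$ is not in the Hilbert basis of $\Nef(X) \cap \Cl(X)$, I would write $D = D_1 + D_2$ with $D_1, D_2$ nonzero lattice points in the nef cone, and aim to show that the multiplication map
\[
\mu : H^0(X, \mathcal{O}_X(D_1)) \otimes H^0(X, \mathcal{O}_X(D_2)) \longrightarrow H^0(X, \mathcal{O}_X(D))
\]
is surjective, contradicting minimality of $s$. Because $-K_X$ is nef, Kawamata-Viehweg gives $H^1(X, \mathcal{O}_X(M)) = 0$ for every nef $M$, and a standard restriction-to-a-curve argument then shows that $\mu$ is surjective provided both $D_1$ and $D_2$ are effective with sufficiently movable linear systems. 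The possible obstruction is thus purely one of effectivity: a summand $D_i$ may lie in $\Nef(X)$ without having any sections.

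This obstruction materializes precisely for elliptic surfaces of Halphen index $m > 1$, where $-\alpha K_X$ is nef but not effective for $1 \leq \alpha < m$, since the smallest effective multiple of $-K_X$ is the fiber class $-mK_X$. For $D = -\alpha K_X + E$ with $E$ a $(-1)$-curve and $2 \leq \alpha < m$, every splitting $D = D_1 + D_2$ inside $\Nef(X)$ either isolates $E$ into one summand (but $E$ is not nef) or forces a summand of the form $-\beta K_X$ with $\beta < m$ (which is not effective); in either case one of the $H^0(D_i)$ vanishes, $\mu$ is unavailable, and $D$ survives as a generator degree, landing in case (3). The hardest part of the proof will be to show that these are the \emph{only} exceptions: for every nef $D$ outside the Hilbert basis and outside case (3), one must exhibit a splitting $D = D_1 + D_2$ with both summands effective and nef such that $\mu$ is surjective. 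I expect this to require a careful walk through the faces of $\Nef(X)$ together with precise control of where the Hilbert basis intersects the non-effective ray spanned by $-K_X$.
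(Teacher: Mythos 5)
Your reduction to the nef case via the fixed part of $|D|$ matches the paper's first step, and the overall skeleton (split a nef class outside the Hilbert basis and kill it by a multiplication/surjectivity argument, with exceptions on Halphen surfaces of index $m>1$) is the right one. However, the technical engine you propose has two concrete errors, both located exactly where the real difficulty lies. First, Kawamata--Viehweg does \emph{not} give $H^1(X,\mathcal O_X(M))=0$ for every nef $M$ when $-K_X$ is nef: the theorem needs $M-K_X$ nef \emph{and big}, which fails on a rational elliptic surface for $M\sim -aK_X$ (there $K_X^2=0$). Indeed, by Harbourne's results (Proposition \ref{harbourne} of the paper), on an index-$m$ surface one has $h^1(X,-aK_X)\neq 0$ for $a\geq m$, and these non-vanishing $H^1$'s are precisely what obstructs the naive surjectivity argument and generates the exceptional classes. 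Second, your description of the obstruction is wrong: $-\alpha K_X$ \emph{is} effective for $1\leq\alpha<m$ (it is $\alpha F$ with $F$ the unique anticanonical curve); what fails is not effectivity but movability --- $|{-\alpha K_X}|$ consists of the single divisor $\alpha F$, and $|-\alpha K_X+E|$ has a base point on $F$. The exceptional classes in case (3) arise from the interaction of these base points (all sitting on $F$, where $(-K_X)_{|F}$ is $m$-torsion), not from vanishing of $H^0$.

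Beyond these errors, the two-factor multiplication map $\mu:H^0(D_1)\otimes H^0(D_2)\to H^0(D)$ is too weak a tool: its surjectivity is genuinely hard to establish and often fails even when $[D]$ is redundant. The paper instead uses Koszul-type lemmas for two or three divisors $E_1,E_2,E_3$ with empty common intersection, showing $\bigoplus_i H^0(D-E_i)\to H^0(D)$ is onto whenever $h^1(D-E_i-E_j)=0$ and $h^2(D-E_1-E_2-E_3)=0$; crucially the $E_i$ need not sum to $D$, and one has the freedom to move them in their linear systems to arrange empty intersection (Lemmas \ref{NiwdP} and \ref{Ni}). The enumeration of configurations where this freedom fails --- governed by which of the $N_i$ have base points on $F$ --- is what produces the precise list of exceptions, including the constraints (absent from your sketch) that the exceptional classes $-\alpha K_X+E$ must be ample with $E$ meeting every $(-2)$-curve. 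To complete your argument you would need to replace the KV step with Harbourne's vanishing criteria and upgrade from the single splitting $D=D_1+D_2$ to the three-term Koszul machinery.
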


Moreover, we partially characterize which elements  of the Hilbert basis of the nef cone are irredundant (see Theorem \ref{necesarios}).

We now give a short description of the content of the paper.

Section 1 contains preliminary results on rational surfaces with nef anticanonical class 
and on Cox rings of surfaces. In the first subsection we recall a result by Harbourne 
about base loci and cohomology of nef divisors on anticanonical surfaces. Moreover we 
prove a result which describes the Hilbert basis of the nef cone of such surfaces.
In the subsequent section we introduce Cox rings, we characterize nef anticanonical surfaces 
with finitely generated Cox ring and we recall some techniques based on Koszul type sequences which 
allow to prove that certain classes of divisors can not appear as degrees of a minimal set of homogeneous 
generators of the ring.

Section 2 contains the proof of the main theorem. This is achieved by proving several auxiliary results, based on Koszul type exact sequences, which allow to show that nef classes which are sums of at least two elements in the Hilbert basis of the nef cone are not necessary to generate the Cox ring, with some very special exceptions. 
At the end of the section we partially characterizes which elements in the Hilbert basis of the nef cone are necessary to generate the Cox ring. 

Section 3 contains examples, mostly for Cox rings of 2-Halphen elliptic surfaces, but also for some higher index ones, as well for some weak del Pezzo surfaces.

  \vspace{4mm}

\noindent {\em Acknowledgments.} We thank Antonio Laface for several useful discussions and for sharing with us Magma programs which allow us to compute the classes of negative curves of many examples of rational elliptic surfaces.

\section{Preliminaries}

\subsection{Anticanonical surfaces}

Let $X$ be a smooth projective rational surface $X$ whose anticanonical class is nef.
By the Riemann-Roch formula and Serre duality the anticanonical class is also effective, thus these are {\em anticanonical surfaces}  \cite{Har}.
Observe that by the adjunction formula the 
integral curves of negative self-intersection of $X$ are either $(-1)$-curves or $(-2)$-curves.

If $-K_X$ is big, then $X$ is a {\em weak del Pezzo surface}, i.e. it is isomorphic to either $\pp^1\times \pp^1$, the Hirzebruch surface $\mathbb F_2$ 
or a blow up of $\pp^2$ in $r \leq 8$ points, where at each step no blown up point lies on a $(-2)$-curve.

Otherwise $K_X^2=0$ and either $h^0(-mK_X)=2$ for some $m>0$ or  $h^0(-mK_X)=1$ for all $m\geq 1$.
In the first case we will say that $X$ is a {\em rational elliptic surface of index} $m$ and can be obtained blowing up 
the base points of an Halphen pencil, i.e. a pencil of plane curves of degree $3m$ with nine (possibly infinitely near) base points of multiplicity $m$. 
This pencil induces an elliptic fibration on $X$ which, in case $m>1$, has a unique fiber of multiplicity $m$ given by $mF$, where $F$ is the unique 
curve in $|-K_X|$, and $F_{|F}$ is a non-trivial $m$-torsion element in $\Pic^0(F)$ \cite{HL}.

We start recalling some properties of nef divisors on nef anticanonical surfaces.
Observe that any nef divisor $D$ is effective and $h^2(X,D)=0$ \cite{Har}*{Corollary II.3}.

\begin{proposition}\label{harbourne}
Let $X$ be a smooth projective rational surface with $-K_X$ nef and $D$ be a non-zero nef divisor of $X$.
\begin{enumerate}
\item If $X$ is a weak del Pezzo surface, then $h^1(X,D)=0$. 
Moreover $|D|$ is base point free unless $\rho(X)=9$ and $D\sim -K_X$ (in this case the base locus consists of one point).
\item If $X$ is a rational elliptic surface of index $m=1$, then $h^1(X,D)=0$ if and only if $-K_X\cdot D>0$.  
Moreover  $|D|$ is base point free unless $D\sim -aK_X+E$, where $a>0$ is an integer and $E$ is a $(-1)$-curve 
(in this case the base locus is $E$).
\item If $X$ is a rational elliptic surface of index $m>1$, then  $h^1(X,D)=0$ if and only if either $-K_X\cdot D>0$ 
or $D\sim -aK_X$ for some integer $a\leq m-1$.
Moreover $|D|$ is base point free unless either $D \sim -aK_X+E$ where $a>0$ is an integer, $E$ 
is a $(-1)$-curve (in this case the base locus is a point in $F\in |-K_X|$),
or $D\sim -aK_X$ for some integer $a>0$ non-divisible by $m$ (and the base locus is $F\in |-K_X|$).
\end{enumerate}
\end{proposition}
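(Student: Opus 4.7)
The plan is to restrict to an anticanonical curve $F\in |-K_X|$---unique in the rational elliptic case, a general smooth member in the weak del Pezzo case---and exploit the short exact sequence
\begin{equation*}
0 \to \mathcal{O}_X(D+K_X) \to \mathcal{O}_X(D) \to \mathcal{O}_F(D|_F) \to 0.
\end{equation*}
The vanishing $h^2(X,D)=0$ is immediate from Serre duality and the observation that $(K_X-D)\cdot H<0$ for any ample $H$ when $D$ is a nonzero nef divisor and $-K_X$ is nef, so $K_X-D$ is not effective.

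For the $h^1$ statements, case (i) is handled directly: since $-K_X$ is big and nef, the class $L:=D-K_X$ is big and nef, and Kawamata--Viehweg gives $h^1(X,D)=h^1(X,K_X+L)=0$. In cases (ii) and (iii) I would induct using the cohomology long exact sequence, exploiting also that $h^2(X,D+K_X)=h^0(X,-D)=0$ to keep the connecting maps controlled. The curve $F$ is Gorenstein of arithmetic genus one, so on it Riemann--Roch yields $h^1(F,D|_F)=0$ precisely when $\deg(D|_F)=-K_X\cdot D>0$, or when $\deg(D|_F)=0$ and $\mathcal{O}_F(D|_F)\not\cong \mathcal{O}_F$. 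The crucial input for the index $m$ case is that $\mathcal{O}_F(-K_X)|_F$ has exact order $m$ in $\Pic^0(F)$: when $-K_X\cdot D=0$, the Hodge index theorem forces $D\sim -aK_X$ for some integer $a\geq 1$, and then $D|_F\cong\mathcal{O}_F$ iff $m\mid a$. This identifies precisely the failure loci described in (ii) and (iii).

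For base-point-freeness, whenever $h^1(X,D+K_X)=0$ the restriction map $H^0(X,D)\twoheadrightarrow H^0(F,D|_F)$ is surjective, so base points lying on $F$ coincide with base points of $|D|_F|$. On a Gorenstein genus-one curve, a degree-$d$ line bundle is base-point-free for $d\geq 2$, has a unique base point for $d=1$, and in degree zero is base-point-free iff trivial. A base point $p\notin F$ of $|D|$ forces every section of the form $t\cdot s_F$, with $t\in H^0(D+K_X)$ and $s_F$ the defining section of $F$, to vanish at $p$; since $s_F(p)\neq 0$, this says $p$ is a base point of $|D+K_X|$, and a downward induction traces such base points to negative curves. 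The main obstacle will be the $-K_X\cdot D=1$ case: one must show that the unique base point of $|D|_F|$ on $F$ really persists as a base point of $|D|$ on $X$, lies on a unique $(-1)$-curve $E$, and that $D-E$ is nef and orthogonal to $K_X$, which then yields $D\sim -aK_X+E$ via the $-K_X\cdot D=0$ analysis above. The remainder is bookkeeping to match the exceptional cases listed in (i)--(iii).
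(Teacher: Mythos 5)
Your overall strategy (restriction to an anticanonical curve, Serre duality and Riemann--Roch on a Gorenstein curve of arithmetic genus one, and the fact that $(-K_X)|_F$ has exact order $m$) is the natural one, and it is essentially the machinery behind the result; note, though, that the paper does not reprove this machinery but quotes Harbourne's Theorem III.1 wholesale and only supplies the bookkeeping needed to extract items (i) and (iii), plus one genuinely new argument. Two of your steps are gaps rather than proofs. First, the ``induction'' for $h^1$-vanishing in cases (ii)--(iii) is never set up: from $0\to\mathcal O_X(D+K_X)\to\mathcal O_X(D)\to\mathcal O_F(D|_F)\to 0$ one gets $h^1(X,D)\le h^1(X,D+K_X)+h^1(F,D|_F)$, but this is useless without control of $h^1(X,D+K_X)$, and $D+K_X$ is in general neither nef nor effective, so the statement being proved gives no inductive handle on it; you do not say what quantity decreases or what the base case is. The good news is that the Kawamata--Viehweg argument you use in case (i) works verbatim whenever $-K_X\cdot D>0$: even when $-K_X$ is not big, $D-K_X$ is nef with $(D-K_X)^2=D^2+2(-K_X\cdot D)+K_X^2\ge 2$, hence big, so $h^1(X,D)=h^1\bigl(X,K_X+(D-K_X)\bigr)=0$. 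You have the tool but only deploy it in case (i).

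The more serious gap is the one you flag yourself: showing that a nef $D$ with $-K_X\cdot D=1$ on a surface of index $m>1$ satisfies $D\sim -aK_X+E$. This is exactly the one part of the proposition the paper proves in full detail (it is not simply read off from the cited theorem), and your plan for it is circular: you propose to locate the base point, show it ``lies on a unique $(-1)$-curve $E$'' and that ``$D-E$ is nef and orthogonal to $K_X$'', but the existence of such an $E$ and the nefness of $D-E$ are precisely what must be established --- a priori there need be no $(-1)$-curve through a given point, and on a non-extremal surface there may be infinitely many. The paper's argument is self-contained and different: by adjunction $D^2=2s+1$ is odd, the divisor $R:=D+(s+1)K_X$ has $R^2=-1$ and $-K_X\cdot R=1$, hence is effective by Riemann--Roch; decomposing $R$ into integral components and intersecting with the negative curve on which $R$ is negative forces $R=E+G$ with $E$ a $(-1)$-curve, $E\cdot G=0$ and $G^2=0$ in $K_X^{\perp}$, whence $G$ is a multiple of $-K_X$. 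Some argument of this kind is indispensable, and without it your case (iii) does not close. (Smaller points to watch: $F$ may be a reducible cycle of rational curves, so ``degree $\ge 2$ implies base point free'' and the $h^1$ criterion on $F$ need the component-wise non-negativity of $\deg D|_F$; and the ``downward induction'' ruling out base points off $F$ is an intention, not yet an argument, since $|D+K_X|$ can have base curves.)
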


\begin{proof} The proof follows from \cite{Har}*{Theorem III.1}. We will only explain how to deduce (i) and (iii) from such result.
 
In case (i) observe that $-K_X\cdot D>0$ and $K_X^\perp$ is negative definite by the Hodge index theorem. 
It follows from the previous theorem that the only case when $|D|$ has base locus 
is when $-K_X\cdot D=1$ and $|-K_X|$ has base locus, i.e. when $K_X^2=1$, and the base locus of $|D|$ is the base point of $|-K_X|$.
In this case since $-K_X\cdot (D+K_X)=0$, $K_X^2>0$ and $(D+K_X)^2\geq 0$, then by the Hodge index theorem $D+K_X\sim 0$ 
(see also \cite{ADHL}*{Proposition 5.2.2.4}).

In case (iii)  $|D|$ can have base locus only if either $-K_X\cdot D=0$ or $-K_X\cdot D=1$.
In the first case $D\sim -aK_X$ 
for some integer $a>0$ non-divisible by $m$ by the Hodge index theorem.
In the second case observe that an integral curve $C$ in $K_X^\perp$ with $h^1(X,C)=1$ is linearly equivalent to $-mK_X$.
Since $m>1$ there are no smooth rational curves $N$ of negative self-intersection such that $N\cdot C\leq 1$. 
Thus the previous theorem implies that the base locus of $|D|$ contains no curves and it consists of at most one 
point in $F\in |-K_X|$. 

Finally, we prove that in case (iii), if $D$ is nef and $-K_X\cdot D=1$, then $D \sim -aK_X+E$ where $a>0$ is an integer and $E$ 
is a $(-1)$-curve. 
Observe that by the genus formula $D^2$ is odd, say $D^2=2s+1$ for some positive integer $s$. 
The divisor $R:=D+(s+1)K_X$  satisfies $R^2=-1$ and $-K_X\cdot R= 1$, thus it is effective by the Riemann Roch formula and Serre duality. 

Decompose $R$ as a sum of integral curves:
$$R=  E + G_1+\dots+G_n,$$
where $-K_X\cdot E=1$ and $-K_X\cdot G_i=0$ for all $i$.
Observe that the curves $G_i$ are either $(-2)$-curves or linearly equivalent to a multiple of $-K_X$, 
since they are contained in a fiber of the elliptic fibration.
Since $R$ is not nef (it has negative square), there exists a negative curve $C$ with $R\cdot C<0$. 
But $R\cdot C\geq 0$ if $C$ is a $(-2)$-curve, thus $C=E$ must be a $(-1)$-curve. 
Moreover
$$0>R\cdot E=-1 + E\cdot (G_1 + \dots+ G_n)$$
and $E\cdot G_i\geq 0$, thus  $E\cdot G_i = 0$ for all $i$. Finally,
        $$-1=R^2= (G_1 + \dots + G_n)^2 -1 $$
        and then $(G_1 + \dots + G_n)^2=0$, which implies \cite{barth peters}*{Lemma 8.2, III} that 
        $G_1+\dots+G_n$ is linearly equivalent to a multiple of $-K_X$. 
Thus $D\sim -a K_X+E$. 
\end{proof}

We now characterize the elements of the Hilbert basis of the nef cone of $X$, which will be denoted by $\BNef(X)$.

 \begin{proposition} \label{bajar} Let $X$ be a smooth projective rational surface with $-K_X$ nef, $\rho(X)\geq 3$
and with polyhedral effective cone. The Hilbert basis of the nef cone of $X$ consists of classes of the following types:
\begin{itemize}
     \item
     $\pi^*(H)$, where $\pi:X\to \mathbb{P}^1$ is a birational morphism and $H$ is the class of a point (these will be called conic bundles since $-K_X\cdot \pi^*(H)=2$),

    \item $\pi^*(H)$, where $\pi:X\to \mathbb{P}^2$ is a birational morphism and $H$ is the class of a line (these will be called twisted cubics since $-K_X\cdot \pi^*(H)=3$), 
    \item $\pi^*(2F+E)$, where $\pi:X\to \mathbb{F}_2$ is a birational morphism, $F$ is the class of a fiber and $E$ of the $(-2)$-curve,
 \item $\pi^*(-K_{Y}$), where $Y$ is a smooth surface 

 and $\pi:X\to Y$ is a birational morphism.
\end{itemize}
\end{proposition}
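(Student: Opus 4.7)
The proof will proceed by induction on $\rho(X)$. The base cases $\rho(X)\le 2$ correspond to the minimal smooth rational surfaces with $-K_X$ nef, namely $\mathbb{P}^2$, $\mathbb{F}_0$, $\mathbb{F}_1$ and $\mathbb{F}_2$; for each of these the nef cone is generated by at most two classes which one can check by inspection to have one of the four listed forms (with $\pi=\id$): the fibers of the natural projections to $\mathbb{P}^1$ give form (i); the hyperplane of $\mathbb{P}^2$ and the class $s+f$ on $\mathbb{F}_1$ viewed as the pullback of a line via $\mathbb{F}_1\to\mathbb{P}^2$ give form (ii); and $2f+s$ on $\mathbb{F}_2$ gives form (iii).

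For the inductive step, fix $D\in\BNef(X)$ with $\rho(X)\ge 3$ and split on whether $D\cdot E=0$ for some $(-1)$-curve $E$. Suppose first that such an $E$ exists, and let $\pi\colon X\to X'$ be its contraction. Then $D=\pi^*D'$ for $D'=\pi_*D$, which is nef on $X'$ by the projection formula; any nontrivial decomposition of $D'$ in $\Nef(X')$ would pull back to a nontrivial nef decomposition of $D$ on $X$, so $D'\in\BNef(X')$. The surface $X'$ is smooth rational with $-K_{X'}$ nef and with polyhedral effective cone $\pi_*\Eff(X)$, so either the base case or the induction hypothesis exhibits $D'$ in one of the four forms on $X'$; composing the witnessing morphism with $\pi$ gives $D$ in the same form on $X$.

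Suppose instead $D\cdot E\ge 1$ for every $(-1)$-curve $E$. I claim that $D+K_X$ is nef. Under the polyhedrality hypothesis together with $-K_X$ nef, the Mori cone $\overline{NE}(X)$ is generated by classes of negative curves (and, when $X$ is a rational elliptic surface, also by $[-K_X]$). Testing $D+K_X$ against each such generator gives $D\cdot E-1\ge 0$ on $(-1)$-curves, $D\cdot C\ge 0$ on $(-2)$-curves, and $D\cdot(-K_X)-K_X^2=D\cdot(-K_X)\ge 0$ on $[-K_X]$, using $K_X^2=0$ in the elliptic case. Hence $D=(-K_X)+(D+K_X)$ is a sum of nef classes, and $D\in\BNef(X)$ forces $D+K_X=0$, so $D=-K_X$, which is form (iv) with $Y=X$ and $\pi=\id$.

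The main obstacle is the description of the extremal rays of $\overline{NE}(X)$ used in the second case: for weak del Pezzo surfaces these are generated by $(-1)$- and $(-2)$-curves, while in the rational elliptic setting the class $[-K_X]$ is additionally extremal and must be included among the generators. This should follow from Mori theory combined with the polyhedrality of $\Eff(X)$, but the elliptic case deserves care. A secondary check is that the hypotheses that $-K_{X'}$ is nef and that $\Eff(X')$ is polyhedral both descend along the $(-1)$-contraction in the first case, so that the induction can close.
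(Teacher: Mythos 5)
Your proof is correct and takes essentially the same route as the paper's: the dichotomy on whether $D$ meets every $(-1)$-curve positively, the key observation that in that case $D+K_X$ is nef so $D=(-K_X)+(D+K_X)$ forces $D\sim -K_X$, and otherwise a reduction by contracting $(-1)$-curves orthogonal to $D$ down to the minimal models $\mathbb{P}^2$, $\mathbb{F}_0$, $\mathbb{F}_1$, $\mathbb{F}_2$. The only differences are organizational: you contract one curve at a time and induct on $\rho$, which sidesteps the paper's separate treatment of the case $D^2=0$ (needed there, via the Hodge index theorem, to ensure the orthogonal $(-1)$-curves are pairwise disjoint and simultaneously contractible), and the fact you flag as needing care --- that the negative curves generate $\Eff(X)$ --- is exactly \cite{ArLa}*{Proposition 1.1} as cited in the paper, with no extra generator $[-K_X]$ required since on an extremal rational elliptic surface $-mK_X$ is a sum of $(-2)$-curves.
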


\begin{proof}
By \cite{ArLa}*{Proposition 1.1} the curves of negative intersection of $X$, i.e. $(-1)$- and $(-2)$-curves, generate the effective cone of $X$.
If $D\cdot E\geq 1$ for every $(-1)$-curve $E$, then $D+K_X$ is nef because it has non-negative intersection with every negative curve. 
It follows that $D=(D+K_X)+(-K_X)$, which implies that $D\sim-K_X$ since $D\in \BNef(X)$. 
If $D^2=0$, by the genus formula we have $2g-2=D\cdot K_X\leq 0$,
so either $-K_X\cdot D=0$ and then $D\sim-K_X$ by the Hodge index theorem, 
or $-K_X\cdot D=2$ and $D$ is a conic bundle. 
Thus we focus on those divisors $D$ with $D^2>0$ and such that there exist $(-1)$-curves orthogonal to $D$.
Contracting all such $(-1)$-curves we obtain that $D$ is the pull-back, by a birational morphism $\pi:X\to Y$ to smooth surface $Y$, 
of an element of $\BNef(Y)$ having positive intersection with all $(-1)$-curves.
If $\rho(Y)\geq 3$ then $D=\pi^*(-K_Y)$ by the previous argument. 
Otherwise $Y$ can be either $\mathbb P^2$, $\mathbb P^1\times\mathbb P^1$, $\mathbb P^2$ blown-up at one point or $\mathbb F_2$. 
Checking all elements of $\BNef(Y)$ for all of these, we get that $D$ is either the pull-back of a conic bundle (whose pullback is again a conic bundle), 
of the class of a line in $\mathbb{P}^2$, or of the class $2F+E$ in $\mathbb{F}_2$ as in the statement. 
\end{proof}

\begin{remark}
Observe that $-K_X$ when $X$ is a rational elliptic surface, conic bundles and twisted cubics are clearly elements of the Hilbert basis of the nef cone, 
because of their small self-intersection and the Hodge index theorem. 
Similarly one can prove the same for  $\pi^*(2F+E)$, since it 
has intersection $1$ with the class $\pi^*(F)$, which is a conic bundle.
In case $Y$ is a weak del Pezzo surface, $-K_Y$ can be either in the Hilbert basis of the nef cone or not, depending on the configuration of $(-2)$-curves of $Y$.  For example it can be easily proved that  $-K_Y$ is not in the Hilbert basis if $Y$ is a del Pezzo surface with $K_Y^2\geq 4$. On the other hand $-K_Y$ is in the Hilbert basis of the nef cone (in fact it generates an extremal ray) for any weak del Pezzo surface whose lattice of $(-2)$-curves has maximal rank.

\end{remark}

\subsection{Cox rings}\label{cox}

We now define our central object of study, which is defined in full generality in \cite{ADHL}*{Chapter 2}.
Given a smooth projective surface over $\mathbb{C}$ whose divisor class group $\Cl(X)$ 
is finitely generated and torsion free, its {\em Cox ring}  is the $\mathbb{C}$-algebra
\[
R(X)= \bigoplus_{D \in K} \Gamma(X,\mathcal O_X(D)),
\]
where $K$ is a subgroup of $\Div(X)$ that projects isomorphically onto $\Cl(X)$ via the quotient map $D\mapsto [D]$.
This is a $K$-graded algebra whose isomorphism class does not depend on the choice of $K$.
If $R(X)$ is a finitely generated algebra, then $X$ is called a {\em Mori dream space}.

Given a homogeneous element $f\in \Gamma(X,\mathcal O_X(D))$ of $R(X)$,
we will denote its {\em degree} with $\deg(f):=[D]\in \Cl(X)$ and 
we will say that it is a {\em defining section} 
of an effective divisor $E$ if  $E=\div(f)+D$.

We will say that a divisor class $[D]$ is a \emph{necessary degree} for $R(X)$
if any minimal set of homogeneous generators of $R(X)$ has an element of degree $[D]$. 

\begin{remark}\label{hilbeff}
It follows from the definition of $R(X)$ that the degrees of a set of homogeneous 
generators of $R(X)$ are a generating set of the effective monoid of $R(X)$.
This implies that the elements of the Hilbert basis of the effective cone of $X$ are necessary degrees.
In particular, this holds for all classes of integral curves with negative self-intersection.
\end{remark}

The following theorem \cite{ArLa} identifies which rational anticanonical surfaces with nef anticanonical class 
are Mori dream spaces.

\begin{theorem}
Let $X$ be a smooth projective rational surface such that $-K_X$ is nef. The Cox ring $R(X)$ is finitely generated 
if and only if $X$ is either
\begin{enumerate}
\item a weak del Pezzo surface, or
\item  a rational elliptic surface with finitely many $(-1)$-curves. 

\end{enumerate}
\end{theorem}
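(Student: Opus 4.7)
The plan is to prove the two implications separately. For sufficiency, I would appeal to the two finite generation results cited in the introduction. If $X$ is a weak del Pezzo surface, then $-K_X$ is big and nef and $X$ has only finitely many negative curves (the $(-1)$- and $(-2)$-curves sit inside a finite root system determined by the anticanonical class, and $\rho(X)\leq 10$); then the theorem of Testa--V\'arilly-Alvarado--Velasco quoted earlier as \cite{TVA} yields finite generation of $R(X)$. If $X$ is a rational elliptic surface with finitely many $(-1)$-curves, then all $(-2)$-curves lie in reducible fibers of the elliptic fibration $|-mK_X|$ and are therefore finite in number too, so $\Eff(X)$ is polyhedral. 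The result of Artebani--Laface \cite{ArLa} cited in the introduction (a positive multiple of $-K_X$ defines an elliptic fibration with finitely many $(-1)$-curves implies finite generation) then applies.

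For necessity, assume $R(X)$ is finitely generated. By Remark \ref{hilbeff}, the degrees of a set of homogeneous generators of $R(X)$ generate the effective monoid, so $\Eff(X)$ is polyhedral; in particular $X$ admits only finitely many integral curves with negative self-intersection, hence only finitely many $(-1)$-curves. I would now split into cases according to the trichotomy for nef anticanonical surfaces recalled in the Preliminaries: either $-K_X$ is big, or $K_X^2=0$ and $h^0(-mK_X)=2$ for some $m>0$, or $K_X^2=0$ and $h^0(-mK_X)=1$ for every $m\geq 1$. The first case gives $X$ weak del Pezzo; the second gives $X$ a rational elliptic surface, and combined with the finiteness of $(-1)$-curves above lands us in case (ii). Thus the only thing left is to rule out the third case.

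The hard part, and the main obstacle, is showing that the third case (no pencil in $|-mK_X|$ for any $m$) is incompatible with $R(X)$ being finitely generated. The natural strategy is to produce infinitely many $(-1)$-curves on $X$, contradicting the polyhedrality of $\Eff(X)$ established above. The idea is that, because no multiple of $-K_X$ moves, the unique curve $F\in |-K_X|$ restricts to a non-torsion element $F_{|F}\in\Pic^0(F)$ (for $F$ smooth; the nodal/cuspidal cases are handled similarly via $\Pic^0$ of the smooth locus). Translation by this non-torsion point defines an infinite-order isometry of $K_X^\perp\subset\Cl(X)$ preserving $-K_X$ and the class of each $(-1)$-curve's intersection pattern with $F$; iterating this isometry on any single $(-1)$-curve produces infinitely many distinct classes of $(-1)$-curves, contradicting finitely many negative curves. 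Making this Mordell--Weil style argument rigorous is exactly the step carried out in \cite{ArLa}, which I would then invoke to finish.
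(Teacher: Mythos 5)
This statement is not proved in the paper at all: it is recalled verbatim from \cite{ArLa} (Artebani--Laface, \emph{Cox rings of surfaces and the anticanonical Iitaka dimension}), so there is no in-paper argument to compare yours against line by line. Your outline is nonetheless a faithful reconstruction of how the result is actually established. The sufficiency direction via \cite{TVA} for $-K_X$ big and via \cite{ArLa} for the elliptic case is exactly the division the paper's introduction describes, and your necessity argument is sound: finite generation forces the effective monoid to be finitely generated, hence $\Eff(X)$ is polyhedral and there are only finitely many negative curves (each integral curve of negative self-intersection spans its own extremal ray), and the trichotomy $-K_X$ big / $h^0(-mK_X)=2$ for some $m$ / $h^0(-mK_X)=1$ for all $m$ reduces everything to excluding the third case.

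The one place where your sketch is not correct as literally written is the mechanism for producing infinitely many $(-1)$-curves in that third case: translation by the non-torsion element $F_{|F}\in\Pic^0(F)$ does not define an isometry of $K_X^\perp\subset\Cl(X)$, so ``iterating this isometry on a $(-1)$-curve'' is not a well-formed operation. The argument that actually works (and is the content of \cite{ArLa}) goes the other way around: one uses the restriction homomorphism $K_X^\perp\to\Pic^0(F)$ together with the fact that, since $-K_X$ is nef, Riemann--Roch and Serre duality make \emph{every} numerical class $E$ with $E^2=E\cdot K_X=-1$ effective; there are infinitely many such classes (they form infinitely many orbits under no finite group once $K_X^2=0$ and no multiple of $-K_X$ moves), and one then shows polyhedrality of $\Eff(X)$ cannot accommodate them. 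Since you explicitly delegate the rigorous version of this step to \cite{ArLa} --- which is precisely what the paper itself does --- the proposal stands as an outline, but you should replace the ``translation acts as an isometry'' phrasing with the Riemann--Roch effectivity argument if you intend to write it out.
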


If $X$ is a relatively minimal rational elliptic surface, equivalent conditions for $X$ to be a Mori dream space are that 
the effective cone of $X$ is rational polyhedral, or that the lattice generated by the classes of $(-2)$-curves 
has rank $8$ \cite{ArLa}*{Theorem 4.2, Proposition 5.1}. 
These will be called {\em extremal rational elliptic surfaces}.
The types of the singular fibers of such elliptic surfaces have been classified in \cite{MiPe}.

We now recall some techniques for computing Cox rings, more precisely
for proving that certain effective degrees $[D]\in \Cl(X)$ are not necessary for $R(X)$.
The following two results are obtained considering Koszul exact sequences.

\begin{lemma}[\cite{B}*{Lemma I.5}]\label{koszul2}
Let $X$ be a smooth projective complex surface, 
$E_1,E_2$ be two disjoint curves of $X$ 
and $f_1,f_2\in R(X)$ be their defining sections.
Given $D\in \Div(X)$ there is an exact cohomology sequence:
\[
  \xymatrix@1@C16pt{
  H^0(X, D-E_1)\oplus H^0(X,D-E_2)\ar[r]^-{g^*} & 
 H^0(X,D)\ar[r] & H^1(X, D-E_1-E_2),
 }
\]
where $g^*(s,t)=sf_1+tf_2$. If $g^*$ is surjective, then $[D]$ is not a necessary 
degree for $R(X)$.
In particular, this holds if $h^1(X,D-E_1-E_2)=0$.
\end{lemma}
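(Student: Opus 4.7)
The plan is to build the exact sequence from a Koszul resolution and then read off the consequence for generators of $R(X)$.

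First I would use the hypothesis that $E_1$ and $E_2$ are disjoint. Because the scheme-theoretic intersection $E_1\cap E_2$ is empty, the pair of local equations $(f_1,f_2)$ forms a regular sequence at every point of $X$: at a point off $E_1\cup E_2$ both $f_i$ are units, while at a point of $E_i\setminus E_j$ one $f_i$ is a unit and the other generates a principal ideal. Consequently the Koszul complex on $(f_1,f_2)$, twisted by $\mathcal O_X(D)$, is a short exact sequence of sheaves
\[
0\longrightarrow \mathcal O_X(D-E_1-E_2)\xrightarrow{\,(f_2,-f_1)\,}\mathcal O_X(D-E_1)\oplus \mathcal O_X(D-E_2)\xrightarrow{\,g\,}\mathcal O_X(D)\longrightarrow 0,
\]
where $g(s_1,s_2)=s_1f_1+s_2f_2$. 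Taking the long exact sequence in cohomology and truncating at the first connecting map yields exactly the displayed sequence, with $g^*$ being the map on $H^0$ induced by $g$.

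Second, I would deduce the claim about necessary degrees. Suppose $g^*$ is surjective. Then every section $s\in H^0(X,\mathcal O_X(D))$ can be written as $s=s_1f_1+s_2f_2$ with $s_i\in H^0(X,\mathcal O_X(D-E_i))$. Viewed in the $\Cl(X)$-graded ring $R(X)$, this says that the degree-$[D]$ piece lies in the subring generated by $f_1$, $f_2$ together with the pieces of degrees $[D-E_1]$ and $[D-E_2]$. Since $[E_i]$ is the class of a nonzero effective divisor, these latter degrees are strictly smaller than $[D]$ in the effective partial order on $\Cl(X)$, so a Noetherian induction on that partial order shows that no minimal homogeneous generator of $R(X)$ can have degree $[D]$.

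Finally, the last assertion is immediate from the exact sequence: if $h^1(X,D-E_1-E_2)=0$, then $g^*$ is forced to be surjective.

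The construction is almost entirely formal once regularity of $(f_1,f_2)$ is known; the only nontrivial step is the conclusion about minimal generators, and even there the obstacle is mild, amounting to checking that the Noetherian induction on degrees works (which is automatic since the effective monoid of $\Cl(X)$ is finitely generated whenever $R(X)$ is finitely generated, which is the setting of interest).
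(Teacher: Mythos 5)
Your proof is correct and follows essentially the same route as the paper, which gives no argument of its own but simply cites Beauville's Lemma I.5 --- precisely the Koszul short exact sequence you build from the disjointness of $E_1$ and $E_2$ --- and then uses the standard graded-Nakayama observation that a degree whose sections are all decomposable into products of sections of smaller nonzero degree cannot carry a minimal generator. The only (harmless) caveat, shared with the statement itself, is the degenerate case $D\sim E_i$, where $s_i$ is a constant and $s_if_i$ is not a product of two elements of positive degree; this situation never occurs in the paper's applications of the lemma.
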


\begin{lemma}[\cite{ACL}] \label{koszul3}
Let $X$ be a smooth complex projective surface, $E_1,E_2,E_3$ be three curves 
of $X$ such that $E_1\cap E_2\cap E_3=\emptyset$ 
and $f_1, f_2, f_3\in R(X)$ be their defining sections. 
If $D\in\Div(X)$ is such that $h^1(X,D-E_i-E_j)=0$ for all distinct $i,j$ and $h^2(X,D-E_1-E_2-E_3)=0$, then there is a surjective map
$$\bigoplus_{i=1}^3 H^0(X,D-E_i)\to H^0(X,D),\quad (g_1,g_2,g_3)\mapsto f_1g_1+f_2g_2+f_3g_3.$$
In particular,  if this holds, $[D]$ is not a necessary 
degree for $R(X)$.
\end{lemma}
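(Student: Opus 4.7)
The plan is to carry out the three-section analog of the proof of Lemma \ref{koszul2}: assemble the Koszul complex associated to the sections $f_1, f_2, f_3$ twisted by $\mathcal{O}_X(D)$, break it into two short exact sequences, and chase cohomology. The hypothesis $E_1 \cap E_2 \cap E_3 = \emptyset$ plays here the role that disjointness of $E_1, E_2$ plays in the two-section version.

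First I would write down the twisted Koszul complex
$$
0 \to \mathcal{O}_X(D - E_1 - E_2 - E_3) \xrightarrow{d_3} \bigoplus_{i<j} \mathcal{O}_X(D - E_i - E_j) \xrightarrow{d_2} \bigoplus_{i=1}^3 \mathcal{O}_X(D - E_i) \xrightarrow{d_1} \mathcal{O}_X(D) \to 0,
$$
where $d_1(g_1, g_2, g_3) = f_1 g_1 + f_2 g_2 + f_3 g_3$, and verify its exactness as a complex of sheaves. Stalkwise at any $x \in X$, the assumption $E_1 \cap E_2 \cap E_3 = \emptyset$ guarantees that at least one $f_i$ is a local generator of $\mathcal{O}_X(E_i)_x$; and the Koszul complex on a sequence one of whose entries is a unit is contractible, so the complex is stalkwise exact.

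Next I would split this four-term exact sequence into two short exact sequences by introducing $\mathcal{K} := \ker(d_1) = \operatorname{im}(d_2)$:
$$
0 \to \mathcal{K} \to \bigoplus_{i=1}^3 \mathcal{O}_X(D - E_i) \xrightarrow{d_1} \mathcal{O}_X(D) \to 0,
$$
$$
0 \to \mathcal{O}_X(D - E_1 - E_2 - E_3) \to \bigoplus_{i<j} \mathcal{O}_X(D - E_i - E_j) \to \mathcal{K} \to 0.
$$
The surjectivity claimed in the statement is equivalent to $H^1(X, \mathcal{K}) = 0$, read off from the long exact sequence of the first.

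Finally, the long exact sequence of the second gives
$$
\bigoplus_{i<j} H^1(X, D - E_i - E_j) \longrightarrow H^1(X, \mathcal{K}) \longrightarrow H^2(X, D - E_1 - E_2 - E_3),
$$
whose outer terms both vanish by hypothesis, so $H^1(X, \mathcal{K}) = 0$. The only step demanding real care is checking the stalkwise exactness of the Koszul complex; once that is in hand the rest is a formal diagram chase entirely parallel to the two-section case.
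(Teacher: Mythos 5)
Your proof is correct, and it is the standard argument: the paper itself does not prove this lemma but only cites \cite{ACL}, where the proof is exactly the Koszul-complex computation you describe (stalkwise exactness because some $f_i$ is a unit at every point, splitting into two short exact sequences, and the two vanishing hypotheses killing $H^1(X,\mathcal K)$). One tiny imprecision: the surjectivity on global sections is \emph{implied by}, not equivalent to, $H^1(X,\mathcal K)=0$ --- it is equivalent to the vanishing of the connecting map $H^0(X,D)\to H^1(X,\mathcal K)$ --- but since you establish the stronger vanishing this does not affect the argument.
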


The following result will be also useful for computations.

\begin{lemma} \label{lemaT}
    Let $X$ be a smooth projective surface,  $D$ be an effective divisor 
    without components in its base locus and $A,B$ be two distinct curves such that $h^0(A,D_{|A})=1$. 
 If $h^0(X,D-B)>0$ and $A$ is not contained in the base locus of $|D-B|$, then $H^0(X,D)$ is generated by $H^0(X,D-A)s_A$ and $H^0(X,D-B)s_B$, where $s_A,s_B$ are defining 
sections of $A$ and $B$. 

\end{lemma}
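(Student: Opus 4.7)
The plan is to exploit the standard exact sequence
\[
0 \to \Osh_X(D-A) \xrightarrow{\cdot s_A} \Osh_X(D) \to \Osh_A(D_{|A}) \to 0
\]
which gives, on global sections, the left-exact sequence
\[
0 \to H^0(X, D-A) \xrightarrow{\cdot s_A} H^0(X, D) \xrightarrow{\;\rho\;} H^0(A, D_{|A}).
\]
Since $\ker \rho = H^0(X, D-A)\, s_A$, the task reduces to showing that for each $f \in H^0(X, D)$ there is some section of the form $s_B h$, with $h \in H^0(X, D-B)$, whose restriction to $A$ equals $\rho(f)$.

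The hypothesis $h^0(A, D_{|A}) = 1$ forces the image of $\rho$ to be either $\{0\}$ or the whole one-dimensional space $H^0(A, D_{|A})$. In the first case every element of $H^0(X,D)$ already lies in $H^0(X, D-A)\, s_A$ and there is nothing to prove. In the second case it suffices to exhibit a single $h_0 \in H^0(X, D-B)$ with $\rho(s_B h_0) \neq 0$: once this is done, for any $f \in H^0(X, D)$ there exists $\lambda \in \cc$ with $\rho(f) = \lambda\, \rho(s_B h_0)$, so that $f - \lambda s_B h_0 \in \ker \rho$ yields a decomposition $f = s_A g + s_B(\lambda h_0)$ with $g \in H^0(X, D-A)$.

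The main step is therefore the construction of $h_0$. Because $A$ is not contained in the base locus of $|D-B|$, we may pick $h_0 \in H^0(X, D-B)$ whose restriction to $A$ is a non-zero section of $\Osh_A((D-B)_{|A})$. Because $A$ and $B$ are distinct integral curves, $A$ is not a component of $B$, so $s_B$ does not vanish identically on $A$ and its restriction is a non-zero section of $\Osh_A(B_{|A})$. The product of these two non-zero sections is a non-zero element of $H^0(A, D_{|A})$, which is exactly $\rho(s_B h_0)$, and we are done.

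The only mildly delicate point is the verification that $s_B$ does not vanish identically on $A$; this relies on reading ``distinct curves'' as distinct prime divisors, and it is precisely this geometric fact (combined with $A$ avoiding the base locus of $|D-B|$) that makes the whole product $s_B h_0$ visible on $A$. The assumption that $D$ has no components in its base locus enters only implicitly, ensuring that restrictions of sections of $\Osh_X(D)$ to subcurves are controlled and that the restriction map $\rho$ is well-behaved.
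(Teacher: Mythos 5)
Your proof is correct and takes essentially the same route as the paper's: both use the restriction sequence $0\to H^0(X,D-A)\to H^0(X,D)\to H^0(A,D_{|A})\cong\cc$ together with the surjectivity of the composite $H^0(X,D-B)\to H^0(X,D)\to H^0(A,D_{|A})$, which follows from $A$ not lying in the base locus of $|D-B|$. Your explicit check that $s_B$ does not vanish identically on $A$ (so that the composite is genuinely nonzero, hence onto the one-dimensional target) just makes precise a step the paper leaves implicit.
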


\begin{proof}
Consider the following diagram:
 \begin{center}
     \begin{tikzcd}
            &                               & {H^0(X,D-B)} \arrow[d, "m_B"] &                                            &            \\
0 \arrow[r] & {H^0(X,D-A)} \arrow[r, "m_A"] & {H^0(X,D)} \arrow[r, "r_A"]   & {H^0(A,D_{|A})}\cong \mathbb{C},
\end{tikzcd}
 \end{center}
where $m_A$, $m_B$ are multiplications by defining sections of $A$ and $B$ respectively and $r_A$ is the restriction to $A$.

Since $A$ is not in the base locus of $|D-B|$ 
then $r_A\circ m_B$ is also surjective.
Given $f\in H^0(X,D)$, let $r_A(f)=c\in H^0(A,D_A)$ and let $g\in H^0(X,D-B)$ 
such that $r_A\circ m_B(g)=c$, 
then $f-m_B(g)\in H^0(X,D-A)$, proving the statement.
\end{proof}
 
\section{Main Theorem} 
In this section we will prove the following results.

\begin{theorem} \label{main1}
Let $X$ be a weak del Pezzo surface with $\rho(X)\geq 3$. The necessary degrees for ${R}(X)$ belong to the 
following list: 
\begin{enumerate}[(i)]
\item classes of $(-1)$-curves and of $(-2)$-curves,  
\item elements of $\BNef(X)$. 

\end{enumerate}
\end{theorem}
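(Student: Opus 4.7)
The plan is to prove the contrapositive: if $[D]$ is a necessary degree of $R(X)$, then $[D]$ is either the class of a negative curve or lies in $\BNef(X)$. First I show that if $[D]$ is not the class of a $(-1)$- or $(-2)$-curve, then $D$ must be nef. If not, some integral curve $C$ satisfies $D\cdot C<0$; then $C$ is a fixed component of $|D|$ (else $E\cdot C\ge 0$ for some $E\in|D|$ not containing $C$), and necessarily $C^2<0$, so $C$ is a negative curve. Multiplication by a defining section $f_C$ yields an isomorphism $H^0(X,D-C)\xrightarrow{\cdot f_C}H^0(X,D)$, factoring every degree-$[D]$ section through the strictly smaller degrees $[C]$ and $[D-C]$; hence $[D]$ would not be necessary, contradicting the assumption. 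So $D$ is nef.

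Next, assume $[D]\in\Nef(X)\setminus\BNef(X)$, and write $D=D_1+D_2$ with $D_1,D_2\in\Nef(X)\setminus\{0\}$ (possible since $\BNef(X)$ generates the nef monoid). The aim is to show $H^0(X,D)$ is spanned by products of sections of the strictly smaller degrees $[D_1],[D_2]$, making $[D]$ non-necessary. By Proposition \ref{harbourne}(i), every nonzero nef divisor on $X$ is base-point free with $h^1=0$, except when $\rho(X)=9$ and the class is $-K_X$; that exception is dispatched by observing that $(-K_X)^2=1$ combined with positivity of $(-K_X)\cdot N$ for every nonzero nef $N$ (by the Hodge index theorem, since $-K_X$ is big) forces $-K_X$ to be indecomposable in $\Nef(X)$, so $[-K_X]\in\BNef(X)$ already. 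Outside this exception, Bertini gives smooth irreducible members $E_i\in|D_1|$: when $D_1^2=0$, two general members $E_1,E_2\in|D_1|$ can be chosen disjoint and Lemma \ref{koszul2} applies; when $D_1^2>0$, three general members $E_1,E_2,E_3\in|D_1|$ have empty common intersection (as $\dim|D_1|\ge 2$ by Riemann--Roch) and Lemma \ref{koszul3} applies. Lemma \ref{lemaT} serves as an auxiliary device in degenerate configurations where $D|_{E_i}$ has $h^0=1$.

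The main obstacle is verifying the cohomology vanishings $h^1(X,D-E_i-E_j)=0$ and $h^2(X,D-E_1-E_2-E_3)=0$ demanded by the Koszul lemmas: the residual classes $D-\sum E_i$ need not remain nef. This is addressed through a judicious choice of the decomposition $D=D_1+D_2$—for instance, with $D_1\in\BNef(X)$ selected so that $D-2D_1$ retains the required cohomology—combined with Serre duality for the $h^2$ vanishing and an inductive reduction on the anticanonical degree $-K_X\cdot D$, the base cases being directly covered by Proposition \ref{harbourne}(i).
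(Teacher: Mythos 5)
Your reduction to the nef case is correct and matches the paper's first step. The problem is the core of the argument: you identify the key obstacle (the residual classes $D-E_i-E_j$ and $D-\sum E_i$ need not satisfy the vanishings required by Lemmas \ref{koszul2} and \ref{koszul3}) but you do not actually overcome it; ``a judicious choice of the decomposition'' and ``an inductive reduction on $-K_X\cdot D$'' are placeholders, not arguments. Concretely, your device of taking three general members of a single linear system $|D_1|$ produces the residual $D_2-2D_1$, which is uncontrollable and already fails in a basic case: for $D=-2K_X$ on a del Pezzo surface of degree one (where $-K_X\in\BNef(X)$, so $D$ is a sum of two Hilbert basis elements) one has $h^2(X,D-3(-K_X))=h^2(X,K_X)=h^0(X,\mathcal O_X)=1$, and moreover $|-K_X|$ has a base point, so no three members have empty common intersection. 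Your treatment of the $\rho(X)=9$ exception only shows that $-K_X$ itself lies in $\BNef(X)$; it does not address $-K_X$ occurring as a \emph{summand} of $D$, which is exactly where the difficulty concentrates.

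The paper resolves this with two devices you are missing. For classes that are sums of at least three Hilbert basis elements, it chooses three possibly distinct nef divisors $N_1,N_2,N_3$ so that $D-N_1-N_2-N_3$ is still nef; then all residuals $D-N_i-N_j$ and $D-\sum N_i$ are nef and the vanishings are automatic from Proposition \ref{harbourne} and Kawamata--Viehweg (with Lemma \ref{NiwdP} and Corollary \ref{can} handling the $\rho(X)=9$, $D\sim-\alpha K_X$ exception). The case of a sum of exactly two Hilbert basis elements is genuinely harder and is treated separately (Corollary \ref{s2} via Lemma \ref{alphaKwdp}): after reducing to $D$ ample and writing $D\sim-K_X+N$ with $N\in\BNef(X)$, the residual case $\rho(X)=9$, $D\sim-2K_X+E$ requires an explicit construction --- exhibiting a $(-1)$-curve $E'$ disjoint from $E$ with $D-E'$ nef and base point free to produce a section not vanishing at the base point of $|-K_X|$ --- which has no analogue in your sketch. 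As written, your proposal does not constitute a proof of the two-summand case, which is where the substance of the theorem lies.
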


\begin{theorem} \label{main2}
Let $X$ be an extremal rational elliptic surface of index $m>1$.  
The necessary degrees for ${R}(X)$ belong to the 
following list:
\begin{enumerate}[(i)]
\item classes of $(-1)$-curves and of $(-2)$-curves, 
\item elements of $\BNef(X)$, 
 \item ample classes of the form $D\sim-\alpha K_X+E$, where $2\leq\alpha< m$ and $E$ is  a $(-1)$-curve.

\end{enumerate}

\end{theorem}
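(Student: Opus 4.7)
The plan is to prove the contrapositive: for any effective class $[D]$ not appearing in the list (i)--(iii), some minimal homogeneous generating set of $R(X)$ has no generator of degree $[D]$. By Remark \ref{hilbeff} all necessary degrees lie in the effective monoid of $R(X)$, so it suffices to handle classes $[D]$ with $D$ effective, not the class of a negative curve, not an element of $\BNef(X)$, and not of the form $-\alpha K_X+E$ with $2\leq \alpha<m$ and $E$ a $(-1)$-curve.

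First I would dispose of the case in which $D$ is not nef by a fixed-component reduction: some integral negative curve $C$ has $D\cdot C<0$, so $C$ is a fixed component of $|D|$ and $H^0(X,D)=s_C\cdot H^0(X,D-C)$, hence $[D]$ is not necessary. Iterating, we may assume $D$ is nef. If moreover $D=A+B$ with $A,B\in\BNef(X)$ non-zero, I would use the classification in Proposition \ref{bajar} to choose $A$ of one of the four types (conic bundle, twisted cubic, pullback of $2F+E$ from $\mathbb{F}_2$, or pullback $\pi^*(-K_Y)$) and produce two or three pairwise disjoint effective representatives of $|A|$; applying Lemma \ref{koszul2}, or Lemma \ref{koszul3} when only three disjoint curves are available, the problem reduces to verifying $h^1(X,D-E_i-E_j)=0$ and $h^2(X,D-E_1-E_2-E_3)=0$. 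Both follow from Proposition \ref{harbourne}(iii) as soon as $-K_X\cdot (D-E_i-E_j)>0$ or the residual class is a multiple $-aK_X$ with $a\leq m-1$, together with Serre duality for the $h^2$ vanishing. This shows that $H^0(X,D)$ is generated by products of strictly lower-degree sections, so $[D]$ is non-necessary.

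The main obstacle, and the point where case (iii) emerges, is the borderline regime in which $-K_X\cdot D$ is minimal and every decomposition forces a residual piece of the form $-aK_X$ with $0<a<m$. In this range $h^0(-aK_X)=1$, generated by the $a$-th power of the unique defining section $s_F$ of $F\in|-K_X|$, which vanishes identically on $F$. A case analysis shows that the only surviving candidates are the ample classes $D\sim -\alpha K_X+E$. For $\alpha\geq m$, the base-point-free system $|-mK_X|$ contains a section $t$ not vanishing on $F$, and the product $t\cdot s$ with $s$ a section of $D+mK_X$ gives a section of $D$ not vanishing on $F$; together with $s_F\cdot H^0(X,D+K_X)$ this generates $H^0(X,D)$ via the restriction-to-$F$ sequence, so $[D]$ is non-necessary. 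For $2\leq\alpha<m$, however, every section of $-aK_X$ with $a<m$ is a power of $s_F$ vanishing on $F$, so no product of strictly lower-degree generators can yield a section of $D$ not vanishing on $F$; yet by Proposition \ref{harbourne}(iii) the base locus of $|D|$ is a single point on $F$, so such a section must exist in $H^0(X,D)$. This forces an indispensable generator in degree $[D]$ and produces case (iii). The bulk of the technical work is the careful enumeration of borderline decompositions in the second paragraph to rule out other exceptional classes.
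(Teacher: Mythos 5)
Your overall architecture matches the paper's: reduce to nef $D$, decompose into nef pieces, apply the Koszul lemmas, and isolate a borderline regime where residual classes of the form $-aK_X$ with $a<m$ obstruct the argument and produce case (iii). You also correctly identify the mechanism behind case (iii), namely that $h^0(-aK_X)=1$ for $a<m$ with the unique section vanishing on $F$. However, there is a concrete gap in your second paragraph. For $D=A+B$ with $A,B\in\BNef(X)$ you propose to take two or three pairwise disjoint representatives $E_1,E_2(,E_3)$ of $|A|$ and verify $h^1(X,D-E_i-E_j)=0$; but then $D-E_i-E_j\sim B-A$, which is in general not effective and whose $h^1$ does not vanish. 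For instance with $A$ a conic bundle and $B\sim -K_X$, Riemann--Roch gives $\chi(B-A)=-1$, so $h^1(B-A)\geq 1$ and Lemma \ref{koszul2} fails as you have set it up. (If you instead meant $E_1\in|A|$, $E_2\in|B|$, then $h^1(D-E_1-E_2)=h^1(\mathcal{O}_X)=0$ is automatic, but the obstruction moves to choosing $E_1,E_2$ disjoint, which is exactly what fails when both systems have their base point on $F$ --- and that is the entire difficulty.) The paper resolves this by the more roundabout route $D\sim -K_X+(D+K_X)$ and a type-by-type analysis of $D+K_X\in\BNef(X)$ according to Proposition \ref{bajar}.

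The second, larger omission is that the "careful enumeration of borderline decompositions" you defer is not a routine verification but the substance of the proof: it is the content of Lemma \ref{Ni} (which lists exactly which triples of nef divisors cannot be made to have empty common intersection) and of Proposition \ref{alphaK} (which, for each type of $N\in\BNef(X)$, constructs bespoke triples of divisors to kill $D\sim-\alpha K_X+N$). In particular, the class $D\sim -2K_X+E+E'$ with $m=2$, where $E,E'$ are $(-1)$-curves meeting at a point with $(E-E')_{|F}\sim(-K_X)_{|F}$, survives every generic Koszul argument and is only eliminated by invoking the Miranda--Persson classification of extremal fibers (types $2A_3+2A_1$ and $4A_2$) and finding an ad hoc disjoint pair $G_2,E_1$. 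Your sketch gives no route to discovering or handling such classes, nor to eliminating candidates like $-\alpha K_X+(\text{twisted cubic})$ or $-\alpha K_X+\pi^*(2F+E)$. Finally, your closing argument that classes of type (iii) \emph{are} necessary is not needed for the theorem (which only bounds the set of necessary degrees from above) and is itself incomplete, since a section of $D$ not vanishing on $F$ could a priori arise from decompositions whose fibral part is supported on fibers other than $mF$.
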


We prove several preliminary results first.

 \begin{lemma} \label{NiwdP}
    Let $X$ be a weak del Pezzo surface and $N_1$, $N_2$, $N_3$ be nonzero nef divisors on $X$. 
    Then, there exist nonzero nef divisors $N_1', N_2', N_3'$ on $X$ such that $N_1'+N_2'+N_3'\sim N_1+N_2+N_3$ and $N_1'\cap N_2'\cap N_3'=\emptyset$ unless $\rho(X)=9$ and $N_1+N_2+N_3\sim-3K_X$.
\end{lemma}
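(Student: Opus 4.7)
The plan is to use Proposition \ref{harbourne}(i) as the sole input: on a weak del Pezzo $X$, every nonzero nef linear system $|D|$ is base-point-free, with the single exception that $\rho(X)=9$ and $D\sim -K_X$, in which case the base locus is a unique point $p$. I will produce the $N_i'$ by moving each $N_i$ inside its own linear class and split into cases according to how many of the $N_i$ lie in the class $[-K_X]$; this is relevant only for $\rho(X)=9$, because no other nef class on a weak del Pezzo has a base point.

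When all three $|N_i|$ are base-point-free---in particular when $\rho(X)\le 8$---I would choose generic $N_2'\in|N_2|$, $N_3'\in|N_3|$ so that $B:=N_2'\cap N_3'$ is a finite set (two base-point-free systems generically intersect properly), and then use that passing through each point of $B$ is a hyperplane condition on the base-point-free system $|N_1|$ to find $N_1'$ avoiding $B$. In the $\rho(X)=9$ case with exactly one $N_i\sim -K_X$, say $N_1$, I would first pick $N_2',N_3'$ avoiding the base point $p$ of $|-K_X|$, so that $B\subset X\setminus\{p\}$, and then use that $|-K_X|$ is a pencil whose base locus is exactly $\{p\}$ to choose $N_1'\in|-K_X|$ missing $B$. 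When exactly two $N_i$, say $N_1,N_2$, lie in $[-K_X]$, any two distinct members of the pencil $|-K_X|$ meet in a length-$K_X^2=1$ scheme, necessarily equal to $\{p\}$; taking $N_3'\in|N_3|$ missing $p$ then finishes the case.

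The remaining case is that all three $N_i\sim -K_X$, giving the exception $N_1+N_2+N_3\sim -3K_X$. The main obstacle is to confirm that this exception is \emph{genuine}, i.e.\ that no redistribution of $-3K_X$ into three nonzero nef summands allows empty triple base locus. For this I would exploit that on a $\rho(X)=9$ weak del Pezzo one has $K_X^2=1$ and $K_X^\perp$ negative definite, so the Hodge index theorem forces $-K_X\cdot N\ge 1$ for every nonzero nef $N$. Hence in any decomposition $-3K_X\sim N_1'+N_2'+N_3'$ with the $N_i'$ nonzero and nef, equality forces $-K_X\cdot N_i'=1$; then $(N_i')^2\in\{0,1\}$ by Hodge index, odd by the Riemann--Roch parity $N^2\equiv N\cdot K_X\pmod 2$, so $(N_i')^2=1$, and finally $(N_i'+K_X)^2=0$ in the negative definite sublattice $K_X^\perp$ gives $N_i'\sim -K_X$. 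Since every member of $|-K_X|$ passes through $p$, the triple intersection can never be emptied, confirming the exception.
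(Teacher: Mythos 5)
Your proof is correct and follows essentially the same route as the paper's: both rest entirely on Proposition \ref{harbourne}(i) (every nonzero nef system on a weak del Pezzo is base point free except $|-K_X|$ when $\rho(X)=9$), and then obtain the $N_i'$ by moving each $N_i$ inside its own linear system; your version simply spells out the case analysis that the paper leaves implicit. Your final paragraph, showing via the Hodge index theorem that the exception is genuine because every decomposition of $-3K_X$ into nonzero nef divisors consists of anticanonical members through the base point, goes beyond what the paper establishes (it only proves the ``unless'' direction), but it is correct.
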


\begin{proof} If one of the three divisors is base point free, we can choose $N_i=N_i'$ for each $i$. 
Otherwise, by Proposition \ref{harbourne},
$\rho(X)=9$ and $N_1+N_2+N_3\sim-3K_X$.
\end{proof}

\begin{lemma}\label{Ni} 
  Let $X$ be a relatively minimal rational elliptic surface of index $m>1$ 
  
  and $N_1,N_2,N_3$ be nef nonzero divisors on $X$. 
  There exist nef nonzero divisors $N_1',N_2',N_3'$ on $X$ such that $N_1+N_2+N_3 \sim N_1'+N_2'+N_3'$ and $N_1'\cap N_2'\cap N_3'=\emptyset$ unless
    \begin{itemize}
 
 \item  $N_1+N_2+N_3\sim -3 K_X+E+E'$, where $E,E'$ are $(-1)$-curves such that $-K_X+E$ and $-K_X+E'$ have the same base point, or
    \item $N_1+N_2+N_3\sim -\alpha K_X+E$, where $\alpha\leq m$ and $E$ is a $(-1$)-curve, or
    \item $N_1+N_2+N_3\sim -\alpha K_X + D$, where $\alpha\leq m$ and $D$ is a nef and base point free divisor, or
    \item $N_1+N_2+N_3 \sim -\alpha K_X$, where $\alpha\leq m+1$.
    \end{itemize}
   
\end{lemma}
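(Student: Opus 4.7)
The plan is to view this as a decomposition problem: given $S := N_1 + N_2 + N_3$, rewrite it as $N_1' + N_2' + N_3'$ with all three nef nonzero and with $\Bs|N_1'| \cap \Bs|N_2'| \cap \Bs|N_3'| = \emptyset$, which is equivalent to being able to choose representative curves whose triple intersection is empty. By Proposition~\ref{harbourne}(iii), a nef nonzero divisor $N$ on $X$ has empty base locus if and only if $-K_X \cdot N \geq 2$ or $N \sim -jmK_X$ for some $j \geq 1$; otherwise the base locus is either a single point on $F$ (when $N \sim -aK_X+E$) or the whole curve $F$ (when $N \sim -aK_X$ with $m \nmid a$). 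So if any $N_i$ is already base point free the conclusion holds with $N_i' = N_i$; I therefore assume every $N_i$ has non-empty base locus and hence lies in one of the two exceptional families of Proposition~\ref{harbourne}(iii), giving $-K_X \cdot N_i \in \{0,1\}$.

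The main strategy is to try to extract the base point free class $-mK_X$ as one of the $N_i'$. Writing $S \sim -\alpha K_X + T$, where $T$ is the (possibly zero) sum of the $(-1)$-curve pieces of the $N_i$ and $-K_X \cdot T = -K_X \cdot S \leq 3$, this amounts to splitting $-(\alpha-m)K_X + T$ as a sum of two nef nonzero divisors. I would run a case analysis on $-K_X \cdot S \in \{0,1,2,3\}$. Since the $-K_X$-degrees of the $N_i'$ are non-negative integers summing to $-K_X \cdot S$, the possible decompositions are very restricted. When $-K_X \cdot S = 0$ (so $T=0$, $S = -\alpha K_X$) the splitting $-mK_X + (-K_X) + (-(\alpha - m - 1)K_X)$ works once $\alpha$ exceeds a threshold depending on $m$; otherwise one lands in exception (iv). When $-K_X \cdot S = 1$ the proof of Proposition~\ref{harbourne}(iii) forces $T = E$ for a single $(-1)$-curve $E$, and an analogous count yields exception (ii). When $-K_X \cdot S \geq 2$ either the $(-1)$-curve pieces can be distributed into different $N_i'$ with distinct base points on $F$, or they are forced to share a common point, which produces exceptions (i) or (iii).

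The main obstacle is the bookkeeping: one must rule out every possible decomposition of $S$ in the exceptional cases. The key tool is that since $-K_X \cdot S \leq 3$, every nef nonzero summand has small anticanonical degree and so, by Proposition~\ref{harbourne}, fits into the dichotomy above; the $(-1)$-curve pieces of any decomposition must match $T$ as classes, and intersection-theoretic identities such as $(E+E')^2 = -2 + 2E\cdot E'$ restrict how $(-1)$-curves can be shuffled. Exception (i) captures the numerically invisible "coincidence" where two distinct $(-1)$-curves meet $F$ at the same point, so this case must be detected by examining base points on $F$ rather than divisor classes. The bulk of the proof will then be the patient verification that in the non-exceptional cases the explicit decomposition with $N_1' = -mK_X$, or a small variant thereof, works and produces nef nonzero $N_2', N_3'$ with empty common base locus.
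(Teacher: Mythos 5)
Your overall strategy --- classify the $N_i$ by their base loci via Proposition \ref{harbourne}(iii), try to extract the base point free class $-mK_X$ as one of the new summands, and record as exceptions the configurations where the anticanonical degree is too small to permit this --- is essentially the paper's proof, which organizes the same analysis by the triple $(b_1,b_2,b_3)$ of dimensions of the base loci. However, your very first reduction contains a genuine error: it is \emph{not} true that if some $N_i$ is base point free you may take $N_j'=N_j$ for all $j$. Suppose $N_1\sim-\alpha_1K_X$ and $N_2\sim-\alpha_2K_X$ with $m\nmid\alpha_i$, so that both have base locus equal to the curve $F\in|-K_X|$, and $N_3$ is base point free with $-K_X\cdot N_3>0$. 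Then every member of $|N_1|$ and of $|N_2|$ contains $F$, and every member of $|N_3|$ meets $F$ since $N_3\cdot F>0$; hence the triple intersection is nonempty for \emph{every} choice of representatives, even though $N_3$ is base point free. This is precisely the paper's Case 5, and it is the source of exception (iii): $N_1+N_2+N_3\sim-(\alpha_1+\alpha_2)K_X+D$ with $D=N_3$ nef and base point free. The paper repairs it when $\alpha_1+\alpha_2\geq m+1$ by replacing $N_1,N_2$ with $-mK_X$ and $-(\alpha_1+\alpha_2-m)K_X$, and lists the remaining range $\alpha_1+\alpha_2\leq m$ as the exception.

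Because you discard all configurations containing a base point free summand at the outset, your standing assumption forces $-K_X\cdot N_i\in\{0,1\}$ for all $i$ and hence $-K_X\cdot(N_1+N_2+N_3)\leq 3$; but in exception (iii) the divisor $D$ can have arbitrarily large anticanonical degree, so your framework cannot produce that exception in the generality the statement demands (your later mention of ``exceptions (i) or (iii)'' in the case $-K_X\cdot S\geq 2$ does not recover it). The fix is to weaken your reduction: a base point free $N_i$ only settles the matter when at most one of the remaining divisors has $F$ in its base locus (so that the finitely many points of a general pairwise intersection can be avoided by a general member of the free system); the case of two summands proportional to $-K_X$ with a free third summand must be treated separately exactly as above. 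A secondary, repairable point: the asserted equivalence between $\Bs|N_1'|\cap\Bs|N_2'|\cap\Bs|N_3'|=\emptyset$ and the existence of representatives with empty triple intersection needs the short moving argument just indicated; it is true here but not purely formal.
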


 \begin{proof}
  We will consider several cases according to the possible fixed loci of the divisors $N_i$
  (we will repeatedly use Proposition \ref{harbourne}). We will classify the distinct cases 
  according to the triple $(b_1,b_2,b_3)$, where $b_i$ is the dimension of the base locus of the divisor $N_i$.
Nefness can be easily checked looking at intersections with $(-1)$- and $(-2)$-curves.

\noindent {\em Case 1.} If $(b_1,b_2,b_3)\in \{(-1,-1,-1), (0,0,-1), (0,-1,-1), (1,-1,-1), (1,0,-1)\}$  we can take $N_i'\sim N_i$ for $i=1,2,3$.\\

\noindent {\em Case 2.} If $(b_1,b_2,b_3)=(0,0,0)$ , then 
         $$N_1\sim -\alpha_1K_X+E_1,\quad N_2\sim -\alpha_2K_X+E_2,\quad N_3\sim -\alpha_3K_X+E_3,$$
         where $E_i$ are $(-1)$-curves and $\alpha_i$ are positive integers. In this case, we reduce to Case 1 by taking
          $$N_1'\sim -(\alpha_1+\alpha_2+\alpha_3-2)K_X,\quad N_2'\sim -K_X+E_1+E_2,\quad N_3'\sim -K_X+E_3.$$
\noindent {\em Case 3.} If  $(b_1,b_2,b_3)=(1,0,0)$ we can assume 
$$N_1\sim -\alpha_1K_X,\quad N_2\sim -\alpha_2K_X+E_2,\quad N_3\sim -\alpha_3K_X+E_3$$ where $\alpha_i>0$ and $E_i$ are $(-1)$-curves.  
If $\Bs(N_2)\neq \Bs(N_3)$, then we can take $N_i\sim N_i'$.
Otherwise,  if $N_2\cap F=N_3\cap F$ we can replace $N_1, N_2$ with $N_1'=N_1-K_X$ and $N_2'=N_2+K_X$ as long as $\sum \alpha_i >3$.
Observe that $\Bs(N'_2)\neq \Bs(N_3)$ since $(-K_X)_{|F}$ is not trivial. The case $\alpha_1=\alpha_2=\alpha_3=1$ gives a first exception.

\noindent {\em Case 4.} If $(b_1,b_2,b_3)=(1,1,0)$, then $N_1\sim -\alpha_1 K_X$, $N_2\sim -\alpha_2 K_X$ and
 $N_3\sim -\alpha_3 K_X + E$, where $E$ is a $(-1)$-curve. In this case, we can take 
              $$N_1'\sim -(\alpha_1+\alpha_2+\alpha_3-m-1)K_X+ E,\quad N_2'\sim -K_X,\quad N_3'\sim -mK_X$$ 
              every time that $\alpha_1+\alpha_2+\alpha_3\geq m+1$. The case $\alpha_1+\alpha_2+\alpha_3\leq m$ gives a second exception.  
              
    \noindent {\em Case 5.} If $(b_1,b_2,b_3)=(1,1,-1)$ and $N_1\sim -\alpha_1 K_X$, $N_2\sim -\alpha_2 K_X$,
           then we obtain a third exception as long as $\alpha_1+ \alpha_2\leq m$. If $\alpha_1+ \alpha_2\geq m+1$, we can take 
              $$N_1'\sim -mK_X,\quad N_2'\sim -(\alpha_1+ \alpha_2-m)K_X,\quad N_3'\sim N_3.$$

\noindent {\em Case 6.} If $(b_1,b_2,b_3)=(1,1,1)$ then $N_i\sim -\alpha_i K_X$ for all $i$, 
which gives a last exception as long as $\alpha_1+\alpha_2+\alpha_3\leq m+1$. 
If $\alpha_1+\alpha_2+\alpha_3\geq m+2$, we can take
          $$N_1'\sim -mK_X,\quad N_2'\sim -(\alpha_1+ \alpha_2+\alpha_3-m-1)K_X,\quad N_3'\sim -K_X.$$
 \end{proof}

\begin{proposition}\label{na}
Let $X$ be either a weak del Pezzo surface or a rational elliptic surface of index $m>1$. 
 If $D$ is a nef divisor such that $[D]\not\in \BNef(X)$ and which is orthogonal to a smooth rational curve, then $[D]$ is not a necessary degree for $R(X)$.
\end{proposition}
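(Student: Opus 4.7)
\smallskip
\noindent\textbf{Proof plan.}
The strategy is to realize $D$ as a sum of three effective divisors with empty triple intersection satisfying the cohomological hypotheses of Lemma~\ref{koszul3}, whence the non-necessity of $[D]$ would follow. Since $[D]\notin\BNef(X)$, one may write $[D]=[N_1]+[N_2]$ with $N_1,N_2$ nonzero nef divisors. If either $[N_i]$ is itself reducible in the nef monoid, iterating immediately produces a three-term decomposition $[D]=[N_1']+[N_2']+[N_3']$ into nonzero nef classes. Otherwise $N_1,N_2\in\BNef(X)$, and the orthogonality hypothesis $D\cdot C=0$ for a smooth rational curve $C$ forces $N_1\cdot C=N_2\cdot C=0$ (both intersections are nonnegative and sum to zero); this extra structure would be used either to produce an alternate three-term nef decomposition adapted to $C$, or to apply Lemma~\ref{lemaT} directly with $A=C$.

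Given a three-term decomposition $[D]=[N_1]+[N_2]+[N_3]$ into nonzero nef classes, I would invoke Lemma~\ref{NiwdP} in the weak del Pezzo case, or Lemma~\ref{Ni} in the rational elliptic case, to replace the $N_i$ by nef representatives $N_i'$ with $N_1'\cap N_2'\cap N_3'=\emptyset$. The cohomological vanishings required by Lemma~\ref{koszul3} reduce to $h^1(X,N_k')=0$ for each $k$ (since $D-N_i'-N_j'\sim N_k'$) together with $h^2(X,\mathcal{O}_X)=0$, the latter being automatic because $X$ is rational. By Proposition~\ref{harbourne}(i) the vanishing $h^1(X,N_k')=0$ is automatic on a weak del Pezzo surface, while on a rational elliptic surface of index $m>1$ one must check that each $N_k'$ satisfies either $-K_X\cdot N_k'>0$ or $N_k'\sim-aK_X$ with $a\le m-1$. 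Lemma~\ref{koszul3} then yields the surjection showing that $[D]$ is not a necessary degree for $R(X)$.

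The principal obstacle lies in the degenerate case where $[D]=[N_1]+[N_2]$ with $N_1,N_2\in\BNef(X)$ and no refinement into three nonzero nef summands is available. Here I plan to use Lemma~\ref{lemaT} with $A=C$, exploiting $h^0(A,D_{|A})=1$ (which holds since $A\cong\mathbb{P}^1$ and $D\cdot A=0$) and choosing $B$ to be an integral component of a suitably chosen effective representative of $N_1$, with the orthogonality ensuring that $A=C$ does not lie in the base locus of $|D-B|$. A secondary difficulty is the list of exceptional configurations appearing in Lemma~\ref{Ni}, namely $D\sim -\alpha K_X$ or $D\sim -\alpha K_X+E$ with small $\alpha$; these special classes must be treated case by case using the explicit base-locus information of Proposition~\ref{harbourne}, the orthogonality hypothesis ruling out most of them by constraining the shape of $D$.
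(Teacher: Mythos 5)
Your plan misses the actual mechanism of the proof and, as written, would fail on the central cases. The paper's argument is a short direct one that uses the orthogonal curve $C$ itself: since $D\cdot C=0$ and $C\cong\mathbb P^1$, the restriction sequence
\[
0\to H^0(X,D-C)\to H^0(X,D)\to H^0(C,\mathcal O_C)\cong\mathbb C
\]
shows that $H^0(X,D)$ is spanned by $f_C\cdot H^0(X,D-C)$ together with at most one further section not vanishing along $C$; writing $D\sim D_1+D_2$ with $D_1,D_2$ nonzero nef (possible because $[D]\notin\BNef(X)$) and choosing $f_i\in H^0(X,D_i)$ not vanishing along $C$, the decomposable product $f_1f_2$ supplies that section. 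No three-term decomposition, no triple-intersection condition and no $h^1$-vanishing enter at all.

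Two steps of your route would actually break down. First, the exceptional cases of Lemma \ref{Ni} are not ``ruled out by orthogonality'': for instance $D\sim-\alpha K_X$ with $3\le\alpha\le m+1$ is orthogonal to every $(-2)$-curve, lies outside $\BNef(X)$, and in any decomposition $-\alpha K_X\sim N_1+N_2+N_3$ into nonzero nef divisors each $N_i\sim -a_iK_X$ with $a_i\le m-1$, so all three have $F\in|-K_X|$ in their base loci and the empty-triple-intersection hypothesis of Lemma \ref{koszul3} can never be arranged; the same applies to non-ample $D\sim-\alpha K_X+E$. These are exactly the divisors this proposition is meant to dispose of --- the paper deduces Corollary \ref{can} and the non-ample part of Proposition \ref{alphaK} from it --- so deferring them ``to be treated case by case'' leaves the substance unproved, and appealing to those later results would be circular. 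Second, in your degenerate case the assertion that orthogonality ensures $C\notin\Bs|D-B|$ is backwards: if $B$ is a component of $N_1$ with $B\cdot C>0$, then $C\cdot(D-B)=-B\cdot C<0$, which forces $C$ to be a component of every member of $|D-B|$; and even when $B\cdot C=0$ the base-locus condition of Lemma \ref{lemaT} needs a separate verification (e.g.\ for $D\sim-2K_X$ with $F$ reducible and $C$ a component of $F$, every admissible $B$ is a component of $F$ and $C$ remains in $\Bs|D-B|$). The direct restriction argument above sidesteps all of this.
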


\begin{proof}
Let $E$ be a smooth rational curve which is orthogonal to $D$. Consider the exact sequence
\[
\xymatrix{
0\ar[r] & H^0(X,D-E)\ar[r]& H^0(X,D)\ar[r]& H^0(E,D_{|E})=H^0(E,\mathcal O_E)\cong \mathbb C.
}
\]
By Proposition \ref{harbourne} nef divisors have no smooth rational curves in their fixed locus.
This implies that the sequence is also exact on the right. 
Since $[D]\not\in \BNef(X)$ then $D\sim D_1+D_2$, where $D_1,D_2$ are nef.
Thus $H^0(X,D)$ can be generated as a vector space by $f_EH^0(X,D-E)$ and $f_1f_2$, where $f_E$ is a defining section of $E$ 
and $f_i\in H^0(X,D_i)$ do not vanish along $E$. Thus in this case $[D]$ is not necessary to generate $R(X)$.
\end{proof}

\begin{corollary}\label{can}
Let $X$ be either a weak del Pezzo surface with $\rho(X)\geq 3$ 
 or an extremal rational elliptic surface of index $m>1$. 
Then $-\alpha K_X$ is not a necessary degree for $R(X)$ when $\alpha>1$.
\end{corollary}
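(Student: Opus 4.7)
The plan is to invoke Proposition \ref{na} applied to $D := -\alpha K_X$. Three hypotheses must be checked. Nefness of $D$ follows from the nefness of $-K_X$ together with $\alpha>0$. The condition $[D]\notin\BNef(X)$ is seen from the decomposition $D\sim (-K_X)+(-(\alpha-1)K_X)$, which writes $D$ as a sum of two nonzero nef classes. Finally, orthogonality to a smooth rational curve $C$ reduces, via the adjunction formula $C^2+K_X\cdot C=-2$, to producing a $(-2)$-curve on $X$: any such $C$ automatically satisfies $D\cdot C=0$.

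For $X$ an extremal rational elliptic surface of index $m>1$, the lattice generated by the classes of $(-2)$-curves has rank $8$ by \cite{ArLa}, so $(-2)$-curves abound and Proposition \ref{na} applies. The same reasoning covers any weak del Pezzo surface with $\rho(X)\geq 3$ that contains at least one $(-2)$-curve.

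The only delicate case is that of a strict del Pezzo surface with $\rho(X)\geq 3$, where by definition no $(-2)$-curves exist and Proposition \ref{na} cannot be applied as stated. Here I would run a Koszul argument based on Lemma \ref{koszul2}: choose two disjoint $(-1)$-curves $E_1,E_2$, which exist since $\rho(X)\geq 3$, verify that $-\alpha K_X-E_1-E_2$ is still nef by testing intersections against the $(-1)$-curves (the only negative curves present), and conclude $h^1(X,-\alpha K_X-E_1-E_2)=0$ from Proposition \ref{harbourne}(i). The main obstacle is precisely this nefness verification in low degree ($K_X^2\leq 2$), where pairs of $(-1)$-curves can meet in more than one point and break the naive bound coming from $-K_X\cdot E_i=1$. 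In such sub-cases one can fall back on the Batyrev--Popov theorem recalled in the introduction, which already describes the minimal generating set of $R(X)$ for every strict del Pezzo with $\rho(X)\geq 3$ and directly implies that $[-\alpha K_X]$ is never necessary for $\alpha>1$.
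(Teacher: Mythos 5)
Your proposal is correct and follows essentially the same route as the paper: apply Proposition \ref{na} to $-\alpha K_X$ (nef, decomposable as $-K_X+(-(\alpha-1)K_X)$, hence not in $\BNef(X)$, and orthogonal to any $(-2)$-curve) whenever a $(-2)$-curve exists, which covers extremal rational elliptic surfaces and non-del-Pezzo weak del Pezzos, and then invoke the Batyrev--Popov description of $R(X)$ for genuine del Pezzo surfaces with $\rho(X)\geq 3$ (the paper cites \cite{ADHL}*{Theorem 5.2.2.1} for exactly this). Your exploratory Koszul detour in the del Pezzo case is unnecessary given the fallback, but the final argument is sound and matches the paper's.
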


\begin{proof}
If $X$ is either an elliptic surface or a weak del Pezzo surface containing $(-2)$-curves, 
this follows from Proposition \ref{na}. If $X$ is a del Pezzo surface this follows from \cite{ADHL}*{Theorem 5.2.2.1}.
\end{proof}

\begin{proposition} \label{alphaK}
     Let $X$ be an extremal rational elliptic surface of index $m>1$. 
      
     Let $D\sim -\alpha K_X+N$, where $\alpha$ is a positive integer and 
     $[N]\in\BNef(X)$. If $[D]$ is a necessary degree for $R(X)$ then $D\sim -\alpha K_X+E$ with $\alpha< m$, where $E$ is a $(-1)$-curve which intersects any $(-2)$-curve.

\end{proposition}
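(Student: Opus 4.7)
The plan is to use Proposition \ref{na}, Lemma \ref{Ni}, and Lemma \ref{koszul3} (together with Proposition \ref{harbourne} for the cohomology vanishings) to force $D$ into the stated form by ruling out all other possibilities.

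First I would observe that $D$ is nef but $[D]\notin\BNef(X)$, since $D\sim-\alpha K_X+N$ is a nontrivial sum of the Hilbert basis elements $-K_X$ (which lies in $\BNef(X)$ for index $m>1$, by the remark following Proposition \ref{bajar}) and $N$. For any $(-1)$-curve $E'$ one has $D\cdot E'=\alpha+N\cdot E'\geq 1$, while for a $(-2)$-curve $C$ one has $D\cdot C=N\cdot C$ because $K_X\cdot C=0$. By Proposition \ref{na}, the necessity of $[D]$ forces $D$ to meet every smooth rational curve strictly, so $N\cdot C>0$ for every $(-2)$-curve $C$. Once $N$ is identified with a $(-1)$-curve $E$ in the last step, this yields the final clause of the conclusion.

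Next I would rule out $[D]$ by a three-term Koszul argument. When $\alpha\geq 2$, decompose $D=(-K_X)+(-K_X)+(-(\alpha-2)K_X+N)$ as a sum of three nonzero nef divisors and invoke Lemma \ref{Ni}: one obtains a decomposition with empty triple intersection unless $D$ lies in one of the four exceptional forms recorded there. Outside the exceptions, the cohomology vanishings $h^1(X,N_i')=0$ required by Lemma \ref{koszul3} follow from Proposition \ref{harbourne}(iii) (the modified pieces have positive $-K_X$-degree or are of the form $-aK_X$ with $a\leq m-1$), and $h^2(X,\Osh_X)=0$ holds since $X$ is rational; so Lemma \ref{koszul3} would give $[D]$ non-necessary, contradicting the hypothesis. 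The case $\alpha=1$ is handled analogously via the two-term Lemma \ref{koszul2}.

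It remains to inspect the four exceptions from Lemma \ref{Ni}. The pure anticanonical multiple $D\sim-\gamma K_X$ with $\gamma\geq 2$ is excluded by Corollary \ref{can}; the form $D\sim-\gamma K_X+D'$ with $D'$ nef and base point free is excluded by Lemma \ref{koszul2} applied to two general disjoint members of $|D'|$ (using Proposition \ref{harbourne} for the residual $h^1$ vanishing), or by Lemma \ref{lemaT} with $B$ a general member of $|D'|$; the form $-3K_X+E+E'$ either matches the desired conclusion or reduces to form $-\gamma K_X+E$ after re-indexing. This leaves $D\sim-\gamma K_X+E$ with $\gamma\leq m$ and $E$ a $(-1)$-curve, which is the desired form for $\gamma<m$; the border case $\gamma=m$ is disposed of by a further Koszul argument exploiting the base point free pencil $|-mK_X|$ defining the elliptic fibration. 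The main obstacle will be the careful verification of the cohomology vanishings underlying each Koszul step, and in particular the explicit treatment of the boundary exception $D\sim-mK_X+E$.
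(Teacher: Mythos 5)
Your opening reduction is the same as the paper's: by Proposition \ref{na}, necessity forces $D$ to be ample, hence $N$ to meet every $(-2)$-curve. After that, however, the core of your argument does not work. Applying Lemma \ref{Ni} to the decomposition $(-K_X)+(-K_X)+(-(\alpha-2)K_X+N)$ almost always lands in that lemma's exceptional list: two of your pieces are $-K_X$ with trivially $1+1=2\leq m$, so you fall into Case 4 or Case 5 of its proof whenever the relevant sums are at most $m$, and when you do escape the exceptions the triples produced there contain $-mK_X$, for which $h^1(-mK_X)\neq 0$ by Proposition \ref{harbourne}(iii). So your parenthetical claim that the modified pieces are either of positive anticanonical degree or of the form $-aK_X$ with $a\leq m-1$ is false, and Lemma \ref{koszul3} cannot be invoked. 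This is precisely why the paper, in the proof of Theorem \ref{main2}, sends the situation ``$h^1(A_{ij})\neq 0$'' \emph{to} Proposition \ref{alphaK} rather than deducing the proposition from Lemma \ref{Ni}; running the implication in your direction is close to circular.

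The exceptional cases, which is where all the real content of this proposition lives, are also not handled correctly. For $D\sim-\gamma K_X+D'$ with $D'$ nef and base point free: two general members of $|D'|$ meet in $D'^2$ points, so they are never disjoint when $D'$ is a twisted cubic or a pullback $\pi^*(-K_Y)$ with $K_Y^2\geq 1$; moreover $D-2D'$ is not nef, so Proposition \ref{harbourne} gives no control on its $h^1$; and Lemma \ref{lemaT} requires a curve $A$ with $h^0(A,D_{|A})=1$, which is unavailable once $D$ is ample with $-K_X\cdot D\geq 2$. The case $D\sim-\alpha K_X+E+E'$ with $E\cdot E'=1$ (arising from a conic bundle whose reducible fibers split into two $(-1)$-curves, or from $\pi^*(-K_Y)$ with two exceptional curves) does \emph{not} ``reduce to $-\gamma K_X+E$ after re-indexing'': $E+E'$ is neither a $(-1)$-curve nor a multiple of $-K_X$. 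In the paper this case needs three different Koszul triples depending on whether $\alpha\geq m+1$, $\alpha\leq m$, or $\alpha>2$, and the residual case $\alpha=m=2$ with $(E-E')_{|F}\sim(-K_X)_{|F}$ is only settled by restricting to the configurations $2A_3+2A_1$ and $4A_2$ via the Miranda--Persson classification. The paper's actual strategy is a case analysis on the type of $N$ given by Proposition \ref{bajar}, with tailored triples in each case (for a twisted cubic, for instance, one factors $\pi$ through a single blow-up and uses $p^*(b^*(H)-E)-(\alpha-1)K_X$, $p^*(E)$, $-K_X$); none of this is recoverable from the uniform argument you propose.
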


\begin{proof}
If $D$ is not ample, then we are done by Proposition \ref{na}.
We restrict to the case when it is ample, or equivalently when $N$ has positive intersection with any $(-2)$-curve. 
We consider several cases according to Proposition \ref{bajar}.

If $N$ is a conic bundle, since there are no $(-2)$-curves in its orthogonal, 
its reducible fibers are the union of two $(-1)$-curves $E,E'$ intersecting at one point,
thus $D\sim -\alpha K_X+E+E'$. We will apply Lemma \ref{koszul3} 
to shows that $[D]$ is not necessary in several cases.
If $\alpha\geq m+1$ Lemma \ref{koszul3}  can be applied with the divisors
\[
N_1=-K_X,\quad N_2=-K_X+E,\quad N_3=-mK_X.
\]
 Observe that $h^1(D-N_i-N_j)=0$ for all distinct $i,j$ by Proposition \ref{harbourne}, 
 and $h^2(D-N_1-N_2-N_3)=h^0(K_X-D+N_1+N_2+N_3)=0$ since the last divisor 
has negative intersection with $-K_X$. 
Moreover the divisors $N_i$ can be chosen to have empty intersection 
since $N_1$, $N_2$ intersect at one point and $N_3$ is base point free.

If $\alpha\leq m$ Lemma \ref{koszul3} can be applied with the divisors
\[
A_1=-(\alpha-1) K_X,\quad A_2=-K_X+E,\quad A_3=E',
\]
or the analogous divisors $A_1, A'_2, A'_3$ with the roles of $E$ and $E'$ exchanged, 
or the divisors
\[
B_1=-(\alpha-2) K_X,\quad B_2=-2K_X+E,\quad B_3=E'
\]
when $\alpha > 2$.
Observe that,  if the restrictions of $A_2$ and $A_3$ 
to $F$ are linearly equivalent, then the same is false
for $A'_2, A'_3$ unless $m=2$ and
for $B_2, B_3$ if $\alpha> 2$, since $\tau:=(-K_X)_{|F}$ is not trivial. 
In each case one can verify that  $h^1(D-N_i-N_j)=0$ for all distinct $i,j$ by Proposition \ref{harbourne} 
and $h^2(0)=0$.
This implies that the only exception is the case $\alpha=m=2$ 
and $D\sim -2K_X+E+E'$, where $E,E'$ are $(-1)$-curves 
intersecting in one point and $\tau+p\sim p'$, where 
$p=F\cap E$ and $p'=F\cap E'$. We consider this case later in the proof.

If $N\sim \pi^*(2F+E)$, where $\pi:X\to \mathbb F_2$ is a birational morphism, 
$F$ is the class of a fiber and $E$ is the class of the $(-2)$-curve of $\mathbb F_2$,
then $N\cdot \pi^*(E)=0$, contradicting our assumption on $N$.

Now assume $N=\pi^*(H)$, where $\pi:X\to \pp^2$ is a birational morphism 
and $H$ is the class of a line.   
Let $\pi=b\circ p$ where $b$ is the blow up over one point with exceptional divisor $E$.
We have that $\pi^*(H)=p^*(b^*(H))$, so we consider the three divisors
\[
N_1=p^*(b^*(H)-E)-(\alpha-1) K_X, \quad N_2=p^*(E),\quad N_3=-K_X.
\]
Notice that $N_1$ is nef and $-K_X\cdot N_1=2$, so it is base point free and $h^1(N_1)=0$ 
by Proposition \ref{harbourne}.
Moreover $h^1(N_2)=0$ because $N_2$ is a $(-1)$-curve and $h^1(-K_X)=0$.
Observe that we can take $N_1$, $N_2$, $N_3$ to have empty intersection up to 
linear equivalence since $N_2, N_3$ intersect at one point and 
$N_1$ is base point free. Thus $[D]$ is not necessary by Lemma \ref{koszul3}.

Now, assume that $N\sim \pi^*(-K_Y)$ where $\pi:X\to Y$ is a birational morphism onto a weak del Pezzo surface $Y$. 
Since $N$ has positive intersection with any $(-2)$-curve, then $N\sim -K_X+E_1+\dots +E_n$, where the $E_i$'s 
are disjoint $(-1)$-curves.
  
If $n>2$ consider the divisors 
$$
N_1=E_n,\quad N_2=-\alpha K_X+E_1+\cdots+E_{n-1},\quad N_3=-K_X.
$$ 
 Since $-K_X\cdot N_2\geq 2$, then $N_2$ is base point free by Proposition \ref{harbourne}.
Since $E_n$ and $-K_X$ intersect at one point, then $N_1, N_2, N_3$ can be chosen to have 
 empty intersection up to linear equivalence. Moreover $h^1(N_1)=h^1(N_2)=h^1(N_3)=h^2(0)=0$ 
  by the Riemann-Roch formula and Proposition \ref{harbourne}. Thus $[D]$ is not necessary by  Lemma \ref{koszul3}.
  
  If $n=1$, then $D\sim -(\alpha+1)K_X+E_1$. 
  If $\alpha+1=m$ we obtain that $[D]$ is not necessary by Lemma \ref{lemaT} with $A=F\in |-K_X|$ and $B=E$. 
  If $\alpha+1> m$ and $\alpha+1\not=0\, ({\rm mod}\ m)$, 
  then we can apply Lemma \ref{koszul2} with $N_1=-rK_X$ and $N_2=-mqK_X$,
  where $q,r$ are quotient and remainder of $\alpha+1$ mod $m$. 
  Observe that $N_1, N_2$ are linearly equivalent to disjoint curves, 
  since $-mqK_X$ is base point free. Moreover $h^1(D-N_1-N_2)=h^1(E_1)=0$.
  On the other hand, If $\alpha+1> m$ and $\alpha+1=0\, ({\rm mod}\ m)$, i.e. $D\sim -mqK_X+E_1$ with $q>1$, then
  we can apply Lemma \ref{koszul2} with $N_1=-m(q-1)K_X$ and $N_2=-mK_X$.
  If $\alpha+1\leq m$ we obtain an exception.

  If $n=2$, then $D\sim -(\alpha +1)K_X+E_1+E_2$ and we can apply the same arguments 
  of the case when $N$ is a conic bundle.  

  Finally, let us consider the case when $D\sim -2 K_X+E_1+E_2$ where $m=2$ and    $E_1,E_2$ are $(-1)$-curves such that $(E_1-E_2)_{|F}\sim (-K_X)_{|F}$ and $E_1+E_2$ intersects any $(-2)$-curve.
By the classification of singular fibers of extremal rational elliptic surfaces \cite{MiPe}, $X$ can only be of type $2A_3+2A_1$ or $4A_2$ (in every other case, $X$ has a fiber with at least 5 components, and then $E_1+E_2$ cannot intersect every $(-2)$-curve). Choosing a fiber with the highest number of components in each type, we have one of the following three cases (up to renaming):

\begin{center}
  \begin{tikzpicture}[scale=.4]
   \draw[thick,fill=black] (-1,1) circle (2 mm);
   \draw[thick,fill=black] (1,1) circle (2 mm);
   \draw[thick,fill=black] (-1,-1) circle (2 mm);
   \draw[thick,fill=black] (1,-1) circle (2 mm);
   \draw[thick,fill=gray] (0,2.5) circle (2 mm);
   \draw[thick,fill=gray] (0,-2.5) circle (2 mm);
      \draw (-1, 1) node[anchor=east]  {$G_1$};
         \draw (-1, -1) node[anchor=east]  {$G_2$};
            \draw (1, 1) node[anchor=west]  {$G_4$};
               \draw (1, -1) node[anchor=west]  {$G_3$};
                  \draw (0, 2.5) node[anchor=south]  {$E_1$};
                     \draw (0, -2.5) node[anchor=north]  {$E_2$};
    \draw (-1,1) -- (1,1);
    \draw (-1,1) -- (-1,-1);
    \draw (-1,-1) -- (1,-1);
    \draw (1,-1) -- (1,1);
 \draw (-1,1) -- (0,2.3);
 \draw (1,1) -- (0,2.3);
  \draw (-1,-1) -- (0,-2.3);
 \draw (1,-1) -- (0,-2.3);

\draw[thick,fill=gray] (8,-2.5) circle (2 mm);
\draw[thick,fill=gray] (8,2.5) circle (2 mm);
    \draw[thick,fill=black] (7,-1) circle (2 mm);
     \draw[thick,fill=black] (8,1) circle (2 mm);
   \draw[thick,fill=black] (9,-1) circle (2 mm);
\draw (7,-1) -- (8,1);
    \draw (8,1) -- (9,-1);
    \draw (7,-1) -- (9,-1);
        \draw (8,-2.3) -- (7,-1);
\draw (8,-2.3) -- (9,-1);
\draw (8,1) to[out=-20,in=-20] (8,2.3);
\draw (8,2.3) to[out=200,in=200] (8,1);

 \draw (8, 1) node[anchor=east]  {$G_1$};   
         \draw (7, -1) node[anchor=east]  {$G_2$};
            \draw (9, -1) node[anchor=west]  {$G_3$};
 \draw (8, 2.5) node[anchor=south]  {$E_1$};
  \draw (8, -2.5) node[anchor=north]  {$E_2$};

   \draw[thick,fill=gray] (16,-2.5) circle (2 mm);
     \draw[thick,fill=gray] (18,1) circle (2 mm);
    \draw[thick,fill=black] (15,-1) circle (2 mm);
      \draw[thick,fill=black] (16,1) circle (2 mm);
   \draw[thick,fill=black] (17,-1) circle (2 mm);
   \draw (16,1) -- (15,-1);
    \draw (15,-1) -- (17,-1);
    \draw (16,1) -- (17,-1);
 \draw (16, 1) node[anchor=east]  {$G_1$};   
         \draw (15, -1) node[anchor=east]  {$G_2$};
            \draw (17, -1) node[anchor=west]  {$G_3$};
 \draw (17,1) node[anchor=south]  {$E_1$};
  \draw (16,-2.5) node[anchor=north]  {$E_2$};
\draw (16,-2.3) -- (15,-1);
\draw (16,-2.3) -- (17,-1);

 \draw (17.8,1) -- (16, 1);
\draw (17.9,0.9) -- (17, -1);
  \end{tikzpicture}
\end{center}

In each of these cases, $G_2$ and $E_1$ are disjoint curves. We can then apply Lemma \ref{koszul2} to these curves: we have that $-2K_X+E_1+E_2 - G_2 - E_1$ is a generalized $(-1)$-curve and then $h^1(-2K_X+E_1+E_2 - G_2 - E_1)=0$. As such, $[D]$ is not a necessary degree for $R(X)$.
\end{proof}

\begin{lemma} \label{alphaKwdp}
     Let $X$ be a weak del Pezzo surface.
     If $D\sim -\alpha K_X+N$, where $\alpha$ is a positive integer and 
     $[N]\in\BNef(X)$, then $[D]$ is not a necessary degree.
\end{lemma}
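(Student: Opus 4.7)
The argument will follow the scheme of Proposition \ref{alphaK}, with significant simplifications since $-K_X$ is big and nef so Proposition \ref{harbourne}(i) guarantees $h^1(X,D')=0$ for every nef divisor $D'$, independently of any index condition. First I would reduce to the case where $D$ is ample: since $-K_X\cdot E=1$ for every $(-1)$-curve $E$, we have $D\cdot E\geq \alpha>0$, so if $D$ fails to be ample it must be orthogonal to some $(-2)$-curve. The decomposition $D\sim (-K_X)+(-(\alpha-1)K_X+N)$ expresses $D$ as a sum of two nonzero nef classes (using $\alpha\geq 1$ and $N\neq 0$), hence $[D]\not\in\BNef(X)$, and Proposition \ref{na} settles this case.

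Assuming now that $D$ is ample, $N$ meets every $(-2)$-curve positively. For $\alpha\geq 2$ I would write
\[
D\sim (-K_X)+(-K_X)+\bigl(-(\alpha-2)K_X+N\bigr),
\]
a sum of three nonzero nef classes, and apply Lemma \ref{NiwdP} to choose representatives $N_1',N_2',N_3'$ with $N_1'\cap N_2'\cap N_3'=\emptyset$. The only exception in Lemma \ref{NiwdP} requires $\rho(X)=9$ and $N_1'+N_2'+N_3'\sim -3K_X$, which forces $\alpha=2$ and $N\sim -K_X$, so $D\sim -3K_X$ and Corollary \ref{can} disposes of this case. Outside the exception, Lemma \ref{koszul3} applies: $H^1(X,D-N_i'-N_j')$ equals $H^1(X,N_k')=0$ by Proposition \ref{harbourne}(i), while $H^2(X,D-N_1'-N_2'-N_3')=H^2(X,\Osh_X)=0$, so $[D]$ is not necessary.

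For $\alpha=1$ the three-summand decomposition into nef divisors is unavailable since $[N]\in\BNef(X)$ and $[-K_X]$ may itself be extremal, so I would instead mirror the case analysis of Proposition \ref{alphaK} on the type of $N$ provided by Proposition \ref{bajar}. For example, if $N$ is a conic bundle, its reducible fibres yield $N\sim E+E'$ with $(-1)$-curves $E,E'$ meeting in a single point, and Lemma \ref{koszul3} with $E_1$ a general member of $|-K_X|$ avoiding $E\cap E'$, together with $E_2=E$ and $E_3=E'$, works: the required $h^1$ and $h^2$ vanishings are immediate from Proposition \ref{harbourne}(i). The twisted cubic, $\pi^*(2F+E)$ and $\pi^*(-K_Y)$ cases are treated analogously by transplanting the corresponding arguments from the proof of Proposition \ref{alphaK}, which simplify in the weak del Pezzo setting since no Halphen index $m>1$ produces additional obstructions.

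The main technical point will be verifying the empty triple intersection condition when $\rho(X)=9$ and $|-K_X|$ carries its single base point, since in that regime not every member of $|-K_X|$ avoids a prescribed point. I expect that the freedom to reshuffle the three curves in each case, combined with the absence of the multiple-fibre and $\Pic^0$-torsion obstructions that drive the elliptic-surface exception in Proposition \ref{alphaK}, will prevent any new exception from arising; but checking this uniformly across the four cases of $N$ in Proposition \ref{bajar}, and especially in the degree-$1$ weak del Pezzo case, is where most of the work will lie.
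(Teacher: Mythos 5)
Your reduction for $\alpha\geq 2$ is correct and in fact tidier than the paper's treatment of those degrees: writing $D\sim(-K_X)+(-K_X)+(-(\alpha-2)K_X+N)$ and invoking Lemma \ref{NiwdP} plus Lemma \ref{koszul3}, with the sole escape $D\sim-3K_X$ absorbed by Corollary \ref{can}, is a clean packaging of what the paper does by hand for the shapes $-\alpha K_X+E$ ($\alpha\geq 3$) and $-\alpha K_X+E_1+E_2$. The non-ample reduction via Proposition \ref{na} also matches the paper.

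The gap is in the $\alpha=1$ branch, and it sits exactly where you flag uncertainty: $\rho(X)=9$ with $N=\pi^*(-K_Y)$ for $Y$ a weak del Pezzo of degree $2$, i.e.\ $N\sim-K_X+E$ and $D\sim-2K_X+E$ with $E$ a $(-1)$-curve passing through the base point $q$ of $|-K_X|$. Your plan of ``transplanting'' the $n=1$ subcase of Proposition \ref{alphaK} cannot work, because that argument runs entirely on the Halphen index $m$ (base-point-free multiples $-mK_X$, Lemma \ref{lemaT} applied to the multiple fibre), none of which is available on a weak del Pezzo; and your hope that reshuffling the three curves restores the empty triple intersection is false here: every member of $|-K_X|$ passes through $q$, $E$ passes through $q$, and $-K_X\cdot E=1$ forces every anticanonical curve to meet $E$ precisely at $q$, so no Koszul triple built from $-K_X$, $-2K_X$ and $E$ can have empty common intersection, and no two of them can be made disjoint either. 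The paper resolves this case not by a Koszul lemma but by a direct generation argument: it shows $H^0(X,D)$ is spanned by $H^0(X,D-E)f_E$, by $H^0(X,-K_X)f$ for a section $f$ of $-K_X+E$ not vanishing on $E$, and by one further section not vanishing at $q$, the last being produced as $g\,f_{E'}$ for a $(-1)$-curve $E'$ disjoint from $E$ after verifying (using the bound $E'\cdot C\leq 3$ on intersections of $(-1)$-curves and the structure of the residual curve $-K_X-E$) that $D-E'$ is nef and hence base point free. This is the substantive half of the paper's proof and is entirely missing from your proposal; as written, your argument would leave this degree unhandled. (For the same reason you should also check the $\alpha=1$ conic-bundle case $D\sim-K_X+E+E'$ when $E\cap E'$ is the base point of $|-K_X|$, where the choice ``general member of $|-K_X|$ avoiding $E\cap E'$'' is unavailable.)
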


\begin{proof}
The proof is similar to the one of the previous Proposition.

If $D\sim -\alpha K_X+E$ and $\alpha\geq 3$ we can  apply Lemma \ref{koszul3} 
with $N_1\sim -K_X$, $N_2=-(\alpha-1)K_X$, $N_3=E$. 
Moreover if $D\sim -\alpha K_X+E_1+E_2$, where $E_1,E_2$ are $(-1)$-curves, then 
we can  apply Lemma \ref{koszul3} 
with $N_1\sim -K_X$, $N_2=E_1$, $N_3=-(\alpha-1)K_X+E_2$. 

We now exclude the case when $\rho(X)=9$ and $D\sim -2 K_X+E$, where $E$ is a $(-1)$-curve passing through the base point of $|-K_X|$ and has positive intersection with any $(-2)$-curve. Since $D\cdot E=1$ and $h^1(-2K_X)=0$ by Proposition \ref{harbourne} there is an exact sequence
\[
\xymatrix{
0\ar[r]& H^0(X,-2K_X)\ar[r]& H^0(X,D)\ar[r]^{r\quad} & H^0(E,\mathcal O_E(1))\ar[r]& 0.
}
\]
In particular the vector space $H^0(X,-2K_X)f_E$, where $f_E$ is a generator of $H^0(X,E)$,
has codimension $2$ in $H^0(X,D)$.
Let $f\in H^0(X,-K_X+E)$ not vanishing along $E$. Consider the composite map
\[
\xymatrix{
H^0(X,-K_X)\ar[r]^{\quad m_f} &H^0(D)\ar[r]^{r\quad }& H^0(E,\mathcal O_E(1)),
}
\]
where $m_f$ is the multiplication by $f$ and $r$ the restriction map to $E$.
The image of this map is one, since the image of the restriction map $H^0(X,-K_X)\to H^0(E,{-K_X}_{|E})$ is one.
This shows that $H^0(X,D)$ can be generated as a vector space by $H^0(X,D-E)f_E$, $H^0(X,-K_X)f$ 
and an element not vanishing at the base point of $|-K_X|$.

Since $-K_X\cdot E=1$ and $E$ passes through the base point of $|-K_X|$, then 
there is a curve $R$ such that $E+R\sim -K_X$. 
Observe that $R$ is a generalized $(-2)$-curve since $-K_X\cdot R=0$.
Since $D$ is ample, then $E$ intersects any $(-2)$-curve of $X$.
This implies that all the $(-2)$-curves of $X$ are components of $R$ 
and that $R$ is either irreducible or the union of two $(-2)$-curves intersecting at one point.

Let $E'$ be a $(-1)$-curve disjoint from $E$ and consider the divisor $D'=D-E'$.
We will show that $D-E'$ is nef. 
If $C$ is a $(-2)$-curve, then $D'\cdot C\geq 0$ since $D\cdot C=E\cdot C>0$ 
and $E'\cdot C\leq 1$ since $C$ is a component of an element in $|-K_X|$.
If $C$ is a $(-1)$-curve then $D'\cdot C=(-2K_X+E-E')\cdot C=2+E\cdot C-E'\cdot C$. 
Since the intersection of two $(-1)$-curves of $X$ is at most $3$, then  $D'\cdot C\geq 0$
unless $E\cdot C=0$ and $E'\cdot C=3$.
In this case, after contracting $E$, the images of $E'$ and $C'$ would be two $(-1)$-curves 
on a del Pezzo surface of degree two with intersection number $3$, which is not possible.

By Proposition \ref{harbourne} $D-E'$ is base point free.
Thus a non-zero element in $H^0(X,D-E')f_{E'}$, where  $f_{E'}$ is a generator of $H^0(X,E')$, is an element 
of $H^0(X,D)$ not vanishing at the base point of $|-K_X|$.
\end{proof}

\begin{corollary}\label{s2}
Let $X$ be either a weak del Pezzo surface with $\rho(X)\geq 3$ 
 or an extremal rational elliptic surface of index $m>1$. 
 Let $D=D_1+D_2$, with $D_1,D_2\in \BNef(X)$, and such that $D$ is not linearly equivalent 
 to the sum of more than two nef divisors. 
Then $[D]$ is not a necessary degree for $R(X)$. 
\end{corollary}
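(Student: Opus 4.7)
The plan is to reduce to Proposition~\ref{na}, Lemma~\ref{alphaKwdp}, or Proposition~\ref{alphaK} via a dichotomy on ampleness of $D$. If $D$ is not ample, then $D$ is orthogonal to some negative curve, and since $[D] \notin \BNef(X)$ (as a nontrivial sum of two Hilbert basis elements), Proposition~\ref{na} immediately yields non-necessity.

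If $D$ is ample, I would first verify that $D + K_X$ is nef: for any negative curve $C$ we have $D \cdot C \geq 1$ and $K_X \cdot C \geq -1$, hence $(D + K_X) \cdot C \geq 0$. Writing $D + K_X$ as a nonnegative integer combination of elements of $\BNef(X)$, the hypothesis forces at most one nonzero term, since two or more would yield $D = -K_X + M_1 + M_2 + \cdots$ as a decomposition into three or more nonzero nef divisors. Hence either $D + K_X = 0$ or $D + K_X \in \BNef(X)$; in the latter case $D = -K_X + N$ with $N \in \BNef(X)$, and Lemma~\ref{alphaKwdp} (weak del Pezzo case) or Proposition~\ref{alphaK} (elliptic case) applies with $\alpha = 1$. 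Lemma~\ref{alphaKwdp} gives non-necessity directly; Proposition~\ref{alphaK} does too, save for the residual elliptic exception $D \sim -\beta K_X + E$ with $\beta < m$ and $E$ a $(-1)$-curve meeting every $(-2)$-curve. The condition $N = -(\beta-1) K_X + E \in \BNef(X)$ forces $\beta = 2$ (since $N$ is not nef for $\beta = 1$ and splits into two nonzero nef summands for $\beta \geq 3$), reducing the residual case to $D \sim -2K_X + E$. For $m = 2$ this fails $\beta < m$, so Proposition~\ref{alphaK} concludes non-necessity directly; for $m \geq 3$ I would apply Lemma~\ref{koszul2} with $E_1 = C$ a $(-2)$-curve satisfying $E \cdot C = 1$ and $E_2 = F$ the unique element of $|-K_X|$, using that $C$ and $F$ are disjoint (as $(-2)$-curves lie in other fibers) and that $h^1(-K_X + E - C) = 0$ (via $\chi = 0$ and vanishing of $h^0$, the latter from surjectivity of the restriction $H^0(-K_X + E) \to H^0(C, (-K_X + E)|_C)$ for the pencil $|-K_X + E|$).

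The degenerate case $D = -K_X$ is handled separately: in the elliptic setting $-K_X \in \BNef(X)$ contradicts $[D] \notin \BNef$; in the weak del Pezzo setting, orthogonality of $-K_X$ to any $(-2)$-curve reduces to the del Pezzo case, where either $-K_X \in \BNef(X)$ for $K_X^2 \leq 3$ (via a Riemann--Roch parity argument showing no nef class of anticanonical degree one exists, again a contradiction) or non-necessity of $[-K_X]$ is known from \cite{BP} or \cite{ADHL}*{Theorem 5.2.2.1}. The main obstacle will be the residual elliptic case $D \sim -2K_X + E$ for $m \geq 3$, specifically the existence of a $(-2)$-curve $C$ with $E \cdot C = 1$, which can be argued from the intersection relation $\sum n_j (E \cdot C_j) = m$ on any reducible multiplicity-one fiber combined with $\chi$-bounds ruling out extremal configurations where every $(-2)$-curve meets $E$ with multiplicity at least two.
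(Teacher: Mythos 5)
Your main line of argument coincides with the paper's: reduce to the ample case via Proposition \ref{na}, observe that $N:=D+K_X$ is nef and (by the hypothesis that $D$ is not a sum of three or more nonzero nef divisors) either vanishes or lies in $\BNef(X)$, and then invoke Lemma \ref{alphaKwdp} resp.\ Proposition \ref{alphaK} with $\alpha=1$. You are right --- and in fact more careful than the paper's own two-line proof --- to note that Proposition \ref{alphaK} does not conclude outright but leaves the residual class $D\sim -2K_X+E$ for $m\geq 3$, with $E$ a $(-1)$-curve meeting every $(-2)$-curve; this class does satisfy the hypotheses of the corollary, since $-K_X+E\in\BNef(X)$ and $-2K_X+E$ admits no decomposition into three nonzero nef summands. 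Note that it is exactly the $\alpha=2$ instance of type (iii) in Theorem \ref{main2}, so the paper in effect carries it along as a possible necessary degree rather than eliminating it here.

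However, your proposed treatment of that residual case contains a genuine error. For $R=-K_X+E-C$ with $C$ a $(-2)$-curve and $E\cdot C=1$ one computes $R^2=-1$ and $R\cdot K_X=-1$, hence $\chi(R)=1$, not $0$; since $h^2(R)=h^0(K_X-R)=0$ (the divisor $K_X-R$ has negative intersection with the nef class $-K_X$), Riemann--Roch forces $h^0(R)\geq 1$. So your claimed vanishing of $h^0(R)$, deduced from surjectivity of the restriction $H^0(-K_X+E)\to H^0(C,(-K_X+E)_{|C})$, is impossible --- that restriction map cannot be surjective, as both spaces are $2$-dimensional while the kernel is nonzero. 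What Lemma \ref{koszul2} actually requires is $h^1(R)=0$, i.e.\ $h^0(R)=1$, which needs a different justification (for instance the vanishing of $h^1$ for effective generalized $(-1)$-curves that the paper invokes at the end of the proof of Proposition \ref{alphaK}). Two further points in this case remain open in your sketch: the disjointness of $C$ and $F\in|-K_X|$ fails if $C$ is a component of the (possibly reducible) multiple fiber, and the existence of a $(-2)$-curve with $E\cdot C=1$ exactly is asserted but not established. So the main reduction is fine and agrees with the paper, but the supplementary argument needed to close the residual case as you (correctly) isolate it does not go through as written.
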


\begin{proof}
By Proposition \ref{na} we can assume $D$ to be ample.
Then $D\sim -K_X+(D+K_X)$, where $N:=D+K_X$ is nef and has positive intersection with 
any $(-2)$-curve. Moreover $N\in \BNef(X)$. 
We then have, by Proposition \ref{alphaK} and Lemma \ref{alphaKwdp}, that $[D]$ is not a necessary degree. \end{proof}

 \begin{proof}[Proof Theorem \ref{main1}]
The degrees of integral curves with negative self-intersection are always necessary to generate $R(X)$, see Remark \ref{hilbeff}.
Moreover, if $D$ is not nef and is not linearly equivalent to one such curve, then $|D|$ has a curve $C$ in its base locus, 
thus $[D]$ is not necessary to generate $R(X)$ since $H^0(X,D)$ can be generated by $H^0(X,D-C)$ and a defining element of $C$.
Thus we will assume $D$ to be nef.

The case when $[D]$ is the sum of two elements in the Hilbert basis of the nef cone, and not more,
is considered in Corollary \ref{s2}.

We now assume that $[D]$ is the sum of at least three elements in the Hilbert basis of the nef cone. 
In this case there are $N_1,N_2,N_3$ non-zero nef divisors such that $D-\sum_{i=1}^3 N_i$ is nef. 

Observe that $h^1(D-N_i-N_j)=0$ for all distinct $i,j$ 
and $h^2(D-N_1-N_2-N_3)=0$  by Proposition \ref{harbourne} or Kawamata-Viehweg vanishing theorem, 
since $D-N_i-N_j$ and $D-N_1-N_2-N_3$ are nef.
Moreover, by Lemma \ref{NiwdP} we can choose $N_1, N_2, N_3$ to have empty intersection 
unless $\rho(X)=9$ and $D\sim -\alpha K_X$ for some integer $\alpha\geq 3$.
We conclude that $[D]$ is not necessary to generate $R(X)$ by Lemma \ref{koszul3} and  Corollary \ref{can}.
\end{proof}
 
\begin{remark}
    Theorem \ref{main1} gives a general statement and an alternative proof for the results about Cox rings of weak del Pezzo surfaces in \cites{DD, DDD}
    and \cite{ADHL}*{Theorem 5.2.2.3}, and extends them to Picard number $9$.
\end{remark}

\begin{proof}[Proof of Theorem \ref{main2}]

By the same arguments in the proof of Theorem \ref{main1} we can assume $D$ 
to be nef.  The case when $[D]$ is the sum of two elements  in the Hilbert basis of the nef cone, and not more,
is considered in Corollary \ref{s2}. We now assume that $[D]$ is the sum of at least three elements in the Hilbert basis of the nef cone. 
In this case there are $N_1,N_2,N_3$ non-zero nef divisors such that $D-\sum_{i=1}^3 N_i$ is nef. 
By Lemma \ref{Ni} we can choose $N_1, N_2, N_3$ to have empty intersection 
unless $D$ is of one of the following types 

\begin{enumerate}

   \item $D\sim -3 K_X+E+E'$, where $E,E'$ are $(-1)$-curves such that $-K_X+E$ and $-K_X+E'$ have a base point in common,
  \item $D\sim-\alpha K_X+E$, where $3\leq\alpha\leq m$ and $E$ is a $(-1)$-curve,
    \item $D\sim-\alpha K_X+N$, where $2\leq\alpha\leq m$ and $N\in\BNef(X)$ is base point free, 
    \item $D\sim-\alpha K_X$, where $\alpha\geq 3$,

\end{enumerate}  
Cases (i)-(iii) are considered in Lemma \ref{alphaK} and its proof,
and case (iv) in Corollary \ref{can}. 
 
In  what follows, we assume that $D$ is not of these types.  
 
We have that $h^2(D-\sum_{i=1}^3 N_i)=h^0(K_X-D+\sum_{i=1}^3 N_i)=0$ by Serre duality and 
the fact that  $-K_X+D-\sum_{i=1}^3 N_i$ is a non-zero effective divisor.
Let $A_{ij}:=D-N_i-N_j$. If $-K_X\cdot A_{ij}>0$ for all distinct $i,j$, then $h^1(A_{ij})=0$ by Proposition \ref{harbourne}.
Thus  $[D]$ is not necessary to generate $R(X)$ by Proposition \ref{koszul3} with the divisors $N_1, N_2, N_3$.

Assume now that  $h^1(A_{ij})\not=0$ for some  distinct $i,j$. 
Then $A_{ij}\sim-\alpha K_X$ for some positive integer $\alpha\geq m$ by Proposition \ref{harbourne} and 
the Hodge index theorem.
Thus $D\sim -\alpha K_X+N_i+N_j$. 

We can assume that neither $N_i$ nor $N_j$ is linearly equivalent to $-K_X$ 
since this case is considered in Lemma \ref{alphaK}.
Consider the nef divisors 
$$A_1=N_i,\quad A_2=N_j-(\alpha-1)K_X,\quad A_3=-K_X.$$ 
Note that $h^2(D-A_1-A_2-A_3)=h^2(0)=0$,  $h^1(D-A_1-A_2)=h^1(-K_X)=0$ 
and $h^1(D-A_2-A_3)=h^1(N_i)=0$ by Proposition \ref{harbourne}.
Moreover,  $h^1(D-A_1-A_3)=h^1(N_j-(\alpha-1)K_X)=0$, otherwise $N_j-(\alpha -1)K_X  \sim -sK_X$
for some positive integer $s$ by Proposition \ref{harbourne}, so that $N_j\sim -K_X$. 
Thus $[D]$ is not necessary to generate $R(X)$ by Proposition \ref{koszul3} with the divisors $A_1, A_2, A_3$.
 
 To conclude, we are left with the exceptions in the statement of Lemma \ref{alphaK}.
\end{proof}

Observe that the class of type (iii) in Theorem \ref{main2} does not appear when $E$
has zero intersection with some $(-2)$-curve. This happens when the index is sufficiently small compared to the number 
of components of a reducible fiber. For example we obtain the following result when $m=2,3$.

\begin{corollary} \label{2halphenspecified}
 Let $X$ be an extremal rational elliptic surface of index $m$. If either $m=2$, or
   $m=3$ and the configuration of singular fibers of $X$ is not of type $4A_2$, 
 then the necessary 
 degrees for $R(X)$ are either classes of negative curves or elements of $\BNef(X)$. 
\end{corollary}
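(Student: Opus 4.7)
The plan is to apply Theorem \ref{main2} directly: it restricts the necessary degrees for $R(X)$ to types (i), (ii), and (iii), and the corollary amounts to ruling out type (iii) under the given hypotheses. For $m=2$ this is immediate, since the constraint $2 \leq \alpha < m = 2$ is empty. The content lies in the case $m = 3$, where any type-(iii) class must have the form $D \sim -2K_X + E$ with $E$ a $(-1)$-curve and $D$ ample. My goal is to show that the ampleness of $D$ forces the singular fiber configuration of $X$ to be $4A_2$, contradicting the hypothesis.

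First I would invoke the refined conclusion obtained in the proof of Proposition \ref{alphaK}: for such a $D$ to be ample, $E$ must intersect every $(-2)$-curve of $X$. Since every fiber is linearly equivalent to $-mK_X = -3K_X$, we have $E \cdot F = 3$. For a reducible fiber $F = \sum n_i C_i$, the intersection condition gives $\sum n_i(E \cdot C_i) = 3$ with each $E \cdot C_i \geq 1$, hence $\sum n_i \leq 3$. A quick inspection of Kodaira's classification shows that the only reducible fiber types compatible with this bound are $I_2$, $I_3$, $III$, and $IV$; everything larger (including $I_0^*$, whose component multiplicities already sum to $6$) is excluded. The same computation applied to the reduction $F_0$ of the multiple fiber, where $E \cdot F_0 = 1$, forces $F_0$ to be irreducible, so the multi-fiber contributes no $(-2)$-curves.

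To close the argument I would combine this with two global invariants of an extremal rational elliptic surface: the Euler characteristic identity $\sum_v e(F_v) = 12$ and the extremality condition that the $(-2)$-curve lattice has rank $8$. Letting $a, b, c, d, e, f$ denote the numbers of fibers of types $I_2, I_3, III, IV, I_1, II$ (with the multi-fiber contributing at most an Euler term and nothing to the $(-2)$-curve rank), these two identities become a pair of linear relations in these non-negative integers. A short manipulation (subtracting twice the rank equation from the Euler equation) should force $a = c = d = e = f = 0$ and $b = 4$; that is, $X$ has exactly four reducible fibers, all of type $I_3$, and the multi-fiber is of type $3I_0$. This is precisely the $4A_2$ root system, giving the desired contradiction.

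The main obstacle is not any single delicate step but the careful accounting: one must correctly tabulate the Kodaira fiber types compatible with the bound $\sum n_i \leq 3$, consistently handle the contribution of the multiple fiber, and solve the resulting pair of linear Diophantine equations in non-negative integers to isolate the unique admissible configuration. Once this is in hand, the corollary follows from Theorem \ref{main2} with no further work.
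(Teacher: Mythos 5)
Your proposal is correct, and for the substantive case $m=3$ it takes a genuinely different route from the paper. The paper's proof simply combines Theorem \ref{main2} with the Miranda--Persson classification of extremal configurations: it checks from the table that every index-$2$ configuration has a fiber with at least $3$ components and every index-$3$ configuration other than $4A_2$ has a fiber with at least $4$ components, so the $(-1)$-curve $E$ in a type-(iii) class (which must meet every $(-2)$-curve, and satisfies $E\cdot F=m$) cannot exist. You replace the table lookup by a self-contained numerical argument: the bound $\sum n_i\le 3$ on multiplicity sums of reducible fibers restricts them to types $I_2,I_3,III,IV$, the reduction of the multiple fiber is forced to be irreducible, and then the two identities $\sum_v e(F_v)=12$ and ``root lattice rank $=8$'' (the paper's definition of extremality) pin down the configuration to four $I_3$ fibers, i.e.\ $4A_2$; your arithmetic ($12-2\cdot 8=-4$ forcing $b\ge 4$ while the rank equation forces $b\le 4$) does close. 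Your treatment of $m=2$ is in fact cleaner than the paper's: since Theorem \ref{main2}(iii) requires $2\le\alpha<m$, that case is vacuous, whereas the paper runs the fiber-component argument there as well. What the paper's approach buys is brevity and no reliance on the Euler-number and trivial-lattice bookkeeping; what yours buys is independence from the classification table and an explicit derivation that $4A_2$ is the unique obstruction, including that the multiple fiber must be of type $_3I_0$.
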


\begin{proof} It follows from Theorem \ref{main2} and the classification of singular fibers of extremal rational elliptic surfaces \cite{MiPe}.
For $X$ of index 2, in all configurations the fibration has a fiber with at least $3$ components, thus there 
is a component which does not intersect $E$. For $X$ of index 3, in all but the configuration in the statement the fibration has a fiber with at least $4$ components, thus there is a component which does not intersect $E$.
\end{proof}

 \begin{theorem}\label{necesarios} 
Let $X$ be a smooth projective rational surface such that $-K_X$ is nef.
Then the following hold.
\begin{enumerate}
\item If $X$ is an elliptic surface, $-K_X$ is a necessary degree for $R(X)$ if and only if $m=1$ and the elliptic fibration has a unique reducible fiber, or $m>1$ and $F\in |-K_X|$ is irreducible.
\item A conic bundle is a necessary degree for $R(X)$ if and only if the associated morphism $\pi:X\to \mathbb P^1$ has a unique reducible fiber.
\item A twisted cubic is a necessary degree for $R(X)$ if and only if it the associated morphism $\pi:X\to \mathbb P^2$
contracts all negative curves to one point.
\item A class of type $\pi^*(2F+E)$, where $\pi:X\to \mathbb F_2$ is a birational morphism, is not a necessary degree for $R(X)$.
\item A class of type $\pi^*(-K_Y)$, where $\pi:X\to  Y$ is a birational morphism distinct from the identity, to a weak del Pezzo surface $Y$ and $|-K_X|$ contains irreducible elements, is a necessary degree for $R(X)$ if and only if $m>1$ and $Y$ is a del Pezzo surface of degree one.
\end{enumerate}
\end{theorem}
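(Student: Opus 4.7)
The proof treats (i)--(v) separately. The common principle is that a class $[D]\in\BNef(X)$ is \emph{not} necessary if and only if the multiplication map
\[
\bigoplus_{\substack{D_1+D_2\sim D \\ D_i \text{ effective, nonzero}}} H^0(X,D_1)\otimes H^0(X,D_2) \longrightarrow H^0(X,D)
\]
is surjective. To establish surjectivity I would apply the Koszul-type Lemmas \ref{koszul2}, \ref{koszul3}, and \ref{lemaT}; to establish non-surjectivity I would compute $h^0(X,D)$ and bound the span of decomposable sections from above.

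For parts (i)--(iii), the class $D$ satisfies $-K_X\cdot D\leq 3$, hence $h^0(X,D)\in\{1,2,3\}$, and the morphism $\varphi_{|D|}$ is explicit. In (i) with $m>1$, the unique anticanonical section is a product of defining sections of negative curves if and only if $F\in|-K_X|$ is reducible. In (i) with $m=1$ and in (ii), $h^0=2$, and decomposable sections correspond to reducible fibers of the associated pencil, so two linearly independent decomposables exist iff the pencil admits at least two reducible fibers. In (iii), $h^0=3$ and decomposable sections arise from lines in $\mathbb{P}^2$ passing through images under $\pi$ of negative curves; if all such curves contract to a single point $p$, the lines through $p$ form a $2$-dimensional subspace so a third generator is needed, while two distinct image points already produce enough lines to span the full $H^0$.

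For (iv), I would combine the factorization $2F+E=F+(F+E)$ on $\mathbb{F}_2$ with the exact sequence
\[
0\to\mathcal{O}_{\mathbb{F}_2}(2F)\to\mathcal{O}_{\mathbb{F}_2}(2F+E)\to\mathcal{O}_E\to 0
\]
to show that all sections of $\pi^*(2F+E)$ split through an irreducible component $A$ of $\pi^*E$. Applying Lemma \ref{lemaT} with this $A$, using $\pi^*(2F+E)\cdot A=0$ (hence $h^0(A,D_{|A})=1$), together with $B=\pi^*F$, shows $[D]$ is never necessary.

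Item (v) is the main obstacle. Writing $\pi^*(-K_Y)=-K_X+E$ with $E=\sum E_i$ the exceptional divisor of $\pi$, one has $h^0(X,\pi^*(-K_Y))=K_Y^2+1$. The \emph{if} direction (when $m>1$ and $Y$ is del Pezzo of degree $1$) follows from $h^0(-K_X)=1$: the decomposition $\pi^*(-K_Y)=-K_X+E$ contributes only a single decomposable section, and the hypothesis that $|-K_X|$ contains irreducible elements, combined with the irreducibility of a generic member of the pencil $|-K_Y|$ on a degree-$1$ del Pezzo surface, precludes further product expressions, so the second dimension of $H^0$ demands a new generator. The \emph{only if} direction splits into three subcases according to which hypothesis fails: if $m=1$, the pencil $|-K_X|$ provides enough products $s\cdot s_E$ to span $H^0$; if $Y$ carries a $(-2)$-curve, its strict transform on $X$ is orthogonal to $\pi^*(-K_Y)$ and supports an application of Lemma \ref{lemaT}; if $K_Y^2\geq 2$, one can find two disjoint exceptional components $E_i,E_j$ with $\pi^*(-K_Y)-E_i-E_j$ nef and then apply Lemma \ref{koszul2}, the required $h^1$ vanishing following from Proposition \ref{harbourne}. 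The delicate part of the argument is a uniform verification of the vanishings and the selection of disjoint curves in the third subcase.
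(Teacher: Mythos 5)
Your handling of (i)--(iii), of the ``if'' direction of (v), and of the $(-2)$-curve subcase of (v) agrees in substance with the paper's proof. However, two steps fail as written.

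In (iv) your application of Lemma \ref{lemaT} is invalid because the roles of the two curves are assigned the wrong way. With $B=\pi^*F$ one has $D-B=\pi^*(F+E)$, and on $\mathbb{F}_2$ the class $F+E$ satisfies $(F+E)\cdot E=-1$, so $E$ is a fixed component of $|F+E|$ and every member of $|\pi^*(F+E)|$ contains $\pi^*E$. Hence any component $A$ of $\pi^*E$ lies in the base locus of $|D-B|$, which is exactly the hypothesis of Lemma \ref{lemaT} that must not fail. The paper makes the opposite choice: $B=\pi^*(E)$ and $A$ a $(-1)$-curve in the exceptional locus of $\pi$ (such a curve exists since $\rho(X)\geq 3$); then $D-B=\pi^*(2F)$ is base point free and the lemma applies. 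Your argument is repaired by swapping the two classes, but as stated it does not go through.

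In the ``only if'' direction of (v) your three subcases do not cover all configurations. The uncovered case is $Y$ a del Pezzo surface of degree $\geq 2$ (so no $(-2)$-curve is available for subcase (b)) with $\pi$ blowing up a single point, so that the exceptional divisor has only one connected component and no pair $E_i,E_j$ of disjoint exceptional curves exists for subcase (c); a concrete instance is $X$ a del Pezzo surface of degree one and $Y$ of degree two, where $\pi^*(-K_Y)=-K_X+E$ does lie in $\BNef(X)$, has $h^0=3$, and subcase (a) is inapplicable because $X$ is not elliptic. (In addition, even when two disjoint exceptional components exist, for $X$ elliptic with $m=1$ and $n=2$ one gets $D-E_i-E_j\sim -K_X$ with $h^1(-K_X)=1$, and the map of Lemma \ref{koszul2} for that choice has image of dimension at most $2<3=h^0(D)$, so the vanishing you invoke from Proposition \ref{harbourne} is genuinely unavailable there; this particular configuration happens to be vacuous for extremal elliptic surfaces, but you would need to say why.) The paper sidesteps all of this: after eliminating $(-2)$-curves on $Y$ via Lemma \ref{lemaT}, it quotes \cite{BP}*{Theorem 3.2} ($-K_Y$ is necessary for $R(Y)$ if and only if $\deg Y=1$) and transfers non-necessity from $Y$ to $X$ by pulling back product decompositions along $\pi$, since the pullback of a section of a nonzero effective class is again a section of a nonzero effective class. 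You need either that reduction to $Y$ or a separate argument for the residual cases above.
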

 
\begin{proof}
The first three items are due to the fact that the class considered is an element of the Hilbert basis 
of the nef cone. 
The first item follows from the fact that if $X$ is an extremal elliptic surface, then the only way
to write $-K_X$ as a sum of effective divisors is as a sum of $(-2)$-curves.
Thus $-K_X$ is a necessary degree of $R(X)$ if and only if  $H^0(-K_X)$ can not be generated by products of defining 
sections of $(-2)$-curves. 
The second item is similar.
In the third item, $H^0(\pi^*(H))$ can be generated by  defining sections of reducible curves if and only if it blows-up at least 
two distinct points in $\mathbb P^2$.
In the fourth item, a class of type $\pi^*(2F+E)$ is not necessary by Lemma \ref{lemaT} applied to the divisors $B=\pi^*(E)$ 
and $A$ any $(-1)$-curve in the exceptional divisor of $\pi$. Finally, for the fifth item, we can write $\pi^*(-K_Y)=-K_X+E_1+\cdots+E_n$, where the $E_i$'s are generalized $(-1)$-curves with $E_i\cdot E_j=0$ for $i\neq j$. If $Y$ contains a $(-2)$-curve $G$, then a class of type $\pi^*(-K_Y)$ is not necessary by Lemma \ref{lemaT} applied to the divisors $A=\pi^*(G)$ and $B=E_1$, so $Y$ contains no $(-2)$-curves. We conclude by \cite{BP}*{Theorem 3.2}, which states that $-K_Y$ is necessary to generate $R(Y)$ if and only if $Y$ is a del Pezzo surface of degree one. For the converse of this item, 
observe that if $Y$ is a del Pezzo surface of degree one, then 
$\pi^*(-K_Y)=-K_X+E$, where $E$ denotes the exceptional divisor over a point $p$, 
has a unique reducible fiber since $-K_Y$ has no reducible fibers and there is a unique element of $|-K_Y|$ that passes through $p$ (since $m>1$). This implies that $\pi^*(-K_Y)=-K_X+E$ is a necessary degree. 
\end{proof}

\begin{remark}
    The hypothesis on the irreducibility of $F\in|-K_X|$ is only needed in the fifth item (we use the fact that $G$ is not in the base locus of $|-K_X|$ to apply Lemma \ref{lemaT}). If we do not assume this and $\pi$ blows up at least two different points, writing $\pi^*(-K_Y)=-K_X+F_1\cdots+F_n$, where $F_i, F_j$ are sums of exceptional curves which are disjoint   for $i\neq j$, we can apply Lemma \ref{lemaT} to the divisors $B=F_1$ and $A=F_2$. This implies that a class of type $\pi^*(-K_Y)$ can be necessary only if $\pi$ is a blow-up over one point.
\end{remark}

\begin{remark}
    If $X$ is a weak del Pezzo surface, we have partial results regarding the necessity of $-K_X$ to generate $R(X)$. 
    For instance: if $X$ is del Pezzo, $-K_X$ is necessary for $R(X)$ if and only if $X$ is of degree one, by \cite{BP}*{Theorem 3.2}; and it is never necessary if $X$ is toric.
    Observe that by Proposition \ref{na} $-K_X$ is necessary only if it belongs to the Hilbert basis of the nef cone. If $X$ is \textit{extremal}, that is the lattice generated by classes of $(-2)$-curves has maximum rank, then by \cite{Y}*{Lemma 3.2} there is an extremal rational elliptic surface that dominates $X$. By the same arguments in  Example \ref{Example  wdp}, one can prove that if $X$ is not toric and is dominated by a rational elliptic surface $Y$ whose Cox ring has exactly one relation, then $-K_X$ is necessary for $R(X)$ if and only if $-K_Y$ is necessary for $R(Y)$.
\end{remark}

\section{Examples}

Given a nef anticanonical surface with finitely generated Cox ring,
Theorem \ref{main1} and Theorem \ref{main2} provide a finite list $L$ of candidates for the 
degrees of a minimal homogeneous set of generators of $R(X)$.
Observe that classes of curves with negative self-intersection are always necessary by Remark \ref{hilbeff}. Moreover Theorem \ref{necesarios} implies that certain nef classes are necessary.

In order to find an irredundant set of degrees in specific examples, 
we further analyze the nef elements of $L$ by applying some tests based on Koszul Lemmas \ref{koszul2} and \ref{koszul3}. 
This is achieved with the help of MAGMA \cite{Magma} programs, inspired by those in \cite{ACL}*{Section 3.3},
which are contained in two different libraries available \href{https://github.com/excelsofiasco/cox-rings-anticanonical-surfaces}{here}. 

In this section, we give some examples of computation of Cox rings of nef anticanonical surfaces.

\subsection{Halphen rational elliptic surfaces of type \texorpdfstring{$E_8$}{E8}} \label{Example  2-Halphen}
Consider the following pencils of cubics in $\mathbb{P}^2$:
$$(x_1^3+x_0^2x_2)+tx_2^3=0$$
$$(x_1^3+x_0^2x_2+x_1^2x_2)+tx_2^3=0.$$
Blowing up the nine infinitely near base points of the pencil, we get a rational elliptic surface $\pi:X\to \mathbb P^1$ with $m=1$ 
whose negative curves have the following intersection graph

\begin{center}
  \begin{tikzpicture}[scale=.4]
    \draw (-1,1) node[anchor=east]  {};
    \foreach \x in {1,...,7}
    \draw[thick,xshift=\x cm,fill=black] (\x cm,0) circle (2 mm);
    \foreach \y in {0,...,6}
    \draw[thick,xshift=\y cm] (\y cm,0) ++(.3 cm, 0) -- +(14 mm,0);
    \draw[thick,xshift=0 cm] (-2 cm,0) circle (2 mm);
    \draw[thick,xshift=0 cm,fill=black] (0 cm,0) circle (2 mm);
    \draw[thick,fill=gray] (10 cm,2 cm) circle (2 mm);
    \draw[thick] (10 cm, 3mm) -- +(0, 1.4 cm);
    \draw[thick] +(-3 mm, 0mm ) -- +(-1.7 cm, 0 cm);
  \end{tikzpicture}
\end{center}
where the black dots correspond to the $(-2)$-curves coming from the exceptional divisors 
of the first $8$ blow-ups over the point $p=(1,0,0)$, the gray dot corresponds to the $(-2)$-curve 
coming from a line passing through $p$ with the same direction as the second blowing up,
and the white dot corresponds to the $(-1)$-curve coming from the final blow-up over $p$. 
The curves corresponding to black and gray dots form a singular fiber of $\pi$, of type $II^*$. These are the surfaces called $X_{22}$ and $X_{211}$ in the notation of \cites{MiPe, AGL}. 
The Cox rings of these surfaces, degrees of generators and relations, are described in \cite{AGL}*{Table 5}. Observe that the matrix of degrees of the generators of $R(X)$ is the same for the two surfaces. 
We now consider rational elliptic surfaces with a fiber of type $II^*$ and $m>1$. 

We now take a $2$-Halphen transform of either $X_{22}$ or $X_{211}$ \cite{ADHL}*{Proposition 5.1.4.6}: 
by contracting the $(-1)$-curve given by the last blow-up over $p$ and blowing up over a 
point $q$ in a smooth fiber of $\pi$ such that $q-p$ is of $2$-torsion (in the group structure of the elliptic curve), 
we get a new surface $X'$. 
The intersection graph of negative curves of  $X'$ is the following

\begin{center}
  \begin{tikzpicture}[scale=.4]
    \draw (-1,1) node[anchor=east]  {};
    \foreach \x in {1,...,7}
    \draw[thick,xshift=\x cm,fill=black] (\x cm,0) circle (2 mm);
     \foreach \x in {1,...,7}
    \draw[xshift=\x cm] (\x cm,-0.2 cm) node[anchor=north]  {$G_{\x}$};
    \foreach \y in {0,...,6}
    \draw[thick,xshift=\y cm] (\y cm,0) ++(.3 cm, 0) -- +(14 mm,0);
    \draw[thick,xshift=0 cm,fill=white] (0 cm,0) circle (2 mm);
    \draw (0 cm, -0.2 cm) node[anchor=north]  {$E_1$};
    \draw[thick,fill=gray] (10 cm,2 cm) circle (2 mm);
    \draw (10.2 cm, 2 cm) node[anchor=west]  {$L$};
    \draw[thick] (10 cm, 3mm) -- +(0, 1.4 cm);

     \draw[thick,fill=black] (2 cm,2 cm) circle (2 mm);
     \draw (2.2 cm, 2 cm) node[anchor=west]  {$T$};
     \draw[thick] (2 cm, 3mm) -- +(0, 1.4 cm);
     \draw[thick] (14 cm,4 cm) circle (2 mm);
     \draw (14.2 cm, 2 cm) node[anchor=west]  {$L_{pq}$};
     \draw (14.2 cm, 4 cm) node[anchor=west]  {$E_q$};
     \draw[thick] (14 cm,2 cm) circle (2 mm);
      \draw[thick] (14 cm, 3mm) -- +(0, 1.4 cm);
   \draw[thick] (14 cm, 23mm) -- +(0, 1.4 cm);
  \end{tikzpicture}
\end{center}
where again the black dots represent $(-2)$-curves ($G_1,\dots, G_7$ 
are those coming from the original exceptional divisors, and $T$ is a new one), 
the white dots are $(-1)$-curves ($E_q$ is the exceptional divisor over $q$,  $L_{pq}$ is the proper transform of the line through $p$ and $q$, 
and $E_1$ is the last exceptional divisor over $p$),
 and the gray dot is the original special line through $p$ (we call it $L$). 
 We know that $X'$ only has three $(-1)$-curves by \cite[]{LT}, 
 so we have them all. By writing all these in terms of the basis $H,E_1,\dots,E_8,E_q$ of $\Pic(X')$, 
 where $H$ is the class of a general line and the $E_i$'s are the classes of the 
 exceptional divisors over $p$ from last to first, we get that
$$G_i=E_{i+1}-E_i,\qquad L=H-E_8-E_7-E_6,\qquad L_{pq}=H-E_8-E_q.$$
Since that the class of a fiber is 
both $-2K_{X'}\sim 6H-\sum_{i}^8 2E_i$ and $T+2G_1+3G_2+4G_3+5G_4+6G_5+4G_6+2G_7+4L$ 
by Kodaira's classification of singular fibers, we get that
$$T\sim3H-E_2-E_3-E_4-E_5-E_6-E_7-E_8-2E_q.$$

By applying the Magma tests described before, we obtain that a necessary degree for $R(X)$ is either the class of a negative curve 
or one the following classes in the Hilbert basis of the nef cone:
\[
C_1= 5H-E_1-E_2-E_3-E_4-E_5-E_6-E_7-E_8-4E_q, 
\]
\[
C_2=-K_{X'},\quad C_3= H - E_q.
\]
Observe that $C_1^2=1$ and the negative curves orthogonal  to it form a connected diagram, 
thus it is a twisted cubic whose associate morphism contracts all the negative curves to a single point. 
The class $C_3$ is a conic bundle with a unique reducible fiber.
Thus $C_1,C_2,C_3$ are necessary degrees by Remark \ref{necesarios}.

\begin{remark}
    Notice that all twisted cubics of $X'$ are
    \begin{itemize}
        \item $5H-E_1-E_2-E_3-E_4-E_5-E_6-E_7-E_8-4E_q$: gives a model of $X'$ as a blow-up in one point,
        \item $4H-E_3-E_4-E_5-E_6-E_7-E_8-3E_q$: gives a model of $X'$ as a blow-up in two points, three times above one and six above the other,
        \item $H$: gives a model of $X'$ as a blow-up in two points, one time above one and eight above the other,
        \item $2H-E_7-E_8-E_q$: gives a model of $X'$ as a blow-up in two points, two times above one and seven above the other,
        \item $3H-E_5-E_6-E_7-E_8-2E_q$: gives a model of $X'$ as a blow-up in two points, four times above one and five above the other.
    \end{itemize}
    These models realize all possible examples of $2$-Halphen pencils that give 
    an $\tilde{E_8}$ type fiber, as proven by Zanardini in \cite{Z}*{Theorem 5.15}.
     \end{remark}

 In  \cite{HZ}*{Example 4.8} Hattori and Zanardini provide the following explicit example of a 3-Halphen pencil of type $E_8$.  Consider the cubic $C$ given by $z^2y + x(y^2 + xz) = 0$, the conic $Q$ given by $y^2 + xz = 0$ and the line $L$ given by $y = 0$. Then the pencil generated by $2Q+5L$ and $3C$ is a $3$-Halphen pencil. We can calculate classes of divisors using \cite{Z}*{Proposition 4.4} and \cite{Z}*{Lemma 4.5}: these imply that we have blow ups over two points $p_1,p_2$. Moreover, the multiplicity of $Q$ and $L$ in the pencil tells us that we have the following (dual) configuration of curves:

\begin{center}
   \begin{tikzpicture}[scale=.4]
\foreach \x in {1,...,8}
    \draw[thick,xshift=\x cm,fill=black] (\x cm,0) circle (2 mm);
    \draw[xshift=1 cm] (1 cm,0.2cm ) node[anchor=south]  {$2Q$};
    \draw[xshift=2 cm] (2 cm,0.2 cm) node[anchor=south]  {$4S_3$};
    \draw[xshift=3 cm] (3 cm,0.2 cm) node[anchor=south]  {$6S_2$};
    \draw[xshift=4 cm] (4 cm,0.2 cm) node[anchor=south]  {$5L$};
    \draw[xshift=5 cm] (5 cm,0.2 cm) node[anchor=south]  {$4R_1$};
    \draw[xshift=6 cm] (6 cm,0.2 cm) node[anchor=south]  {$3R_2$};
    \draw[xshift=7 cm] (7 cm,0.2 cm) node[anchor=south]  {$2R_3$};
     \draw[xshift=8 cm] (8 cm,0.2 cm) node[anchor=south]  {$R_4$};

    \foreach \y in {1,...,7}
    \draw[thick,xshift=\y cm] (\y cm,0) ++(.3 cm, 0) -- +(14 mm,0);

\draw[thick,fill=black] (6 cm, -2 cm) circle (2 mm);
   
    \draw[thick] (6 cm, 0 cm ) -- (6 cm,-2 cm);

\draw (6 cm, -2.5 cm) node[anchor=north]  {$3S_1$};

\end{tikzpicture}
\end{center}
We call, from last one to first one, $E_1,E_2,E_3,E_4,E_5$ the exceptional divisors over $p_1$, $F_1,F_2,F_3,F_4$ the exceptional divisors over $p_2$, and use $Q,L_1$ for the classes of the proper transforms of the curves $Q,L$. By using the basis of $\Pic(X)$ given by $\{H,E_i,F_i\}$, we get that the $(-2)$-curves of $X$ are given by
$$R_i=E_{i+1}-E_i,\quad S_i=F_{i+1}-Fi$$
$$L_1=H-E_5-F_3-F_4,\quad Q=2H-E_1-E_2-E_3-E_4-E_5-F_4.$$
Moreover we can find all the $(-1)$-curves of $X$. We find that the nef necessary degrees are in the list:
\[
C_1=-K_X,\quad C_2=H-F_4, \quad C_3=2H-F_1-F_2-F_3-F_4, 
\]
\[
C_4=7H-3E_2-3E_3-3E_4-3E_5-F_1-2F_2-2F_3-2F_4.
\]
All of them are necessary by Remark \ref{necesarios}. 

In \cite{LT} Laface and Testa give a method to compute the classes of curves with negative self-intersection of a family of $m$-Halphen rational surfaces for any $m>0$. Roughly speaking, they show that the ways one can embed the lattice $\Lambda$ generated by classes of $(-2)$-curves of an $m$-Halphen rational elliptic surface in the $E_8$ lattice are parametrized by the group $\Ext(E_8/\Lambda,C_m)$ \cite{LT}*{Lemma 2.5} and they give a way to, given $(m,\xi)$ with $\xi \in \Ext(E_8/\Lambda,C_m)$, construct an $m$-Halphen rational elliptic surface with associated lattice $\Lambda$ \cite{LT}*{Proposition 2.6}. This essentially allows us to skip obtaining a geometric model for the surface.

By using a Magma program, available \href{https://github.com/excelsofiasco/cox-rings-anticanonical-surfaces/blob/main/Laface%20and%20Testa's%20program}{here},  
we compute all classes of negative curves of a $4$-Halphen and a $5$-Halphen rational surface $X$ of type $E_8$. 
This allows us to find the necessary degrees for their Cox rings.
We resume our results in Table \ref{halphenE8}.

 \begin{table}[h]    
 
         \begin{tabular}{c|ccccc}\small
        $m$  &  Necessary degrees & Negative curves & Conic bundles & Models to $\mathbb{P}^2$ & $-K_X$ \\ \midrule
       $1$  &   $13$ & 10 & 1 & 1& 1\\
       $2$  &   $15$ & 12 & 1 & 1& 1\\
        $3$  &   $18$ & 14 & 3 & 0& 1\\
        $4$  &   $23$ & 19 & 2 & 1& 1\\
        $5$  &   $31$ & 24 & 5 & 1& 1\\[2pt]
    \end{tabular}
    
    \caption{Halphen surfaces of type $E_8$}\label{halphenE8}
    \end{table}

\begin{remark} 
Let $T_1,\dots,T_n$ be the generators of $R(X)$.
Given a birational model $\pi: X\to \mathbb P^2$, consider those generators $T_1,\dots, T_r$ 
whose associated curves map to curves in $\mathbb P^2$ 
and let $f_1,\dots,f_r$ be their equations in $\mathbb C[x,y,z]$. 
By \cite{AGL}*{Corollary 4.2} the ideal of relations $I(X)$ of $R(X)$ is given by 
the saturation $I$ of the kernel of the homomorphism
\[
\mathbb C[T_1,\dots,T_r]\to \mathbb C[x,y,z],\ T_i\mapsto f_i
\]
with respect to the variables $ T_{r+1},\dots, T_n$, if $\dim(I)=\dim(I(X))$.
If we choose $H$ as a model for the $2$-Halphen surface $X'$ considered above, the equation of the plane curves are:
$$  x^5 + x^4y + z^3y^2 + x^3yz - xy^3z - y^4z,\quad x+y, \quad x,$$
    $$ z^2x + y^2z - x^3 - x^2z, \quad y^2z-x^3-x^2z, \quad z. $$
Unfortunately, for computational limits, we are not able to compute the ideal $I$.
\end{remark}

\subsection{A 2-Halphen rational elliptic surface of type \texorpdfstring{$D_8$}{D8}} \label{Example  2-Halphen D8}

We consider the model in \cite{Z}*{Example 7.30}. Let $C$ be a smooth plane cubic, $p_1$ a flex point of $C$ and $L_4$ its inflection line. Let $L_1$ be a line
through $p_1$ which is tangent to $C$ at another point $p_2$, and let $L_3$ be another line through $p_1$ which is tangent to $C$ at a different point $p_3$. Let $L_2$ be the line joining $p_2$ and $p_3$, and $p_4$ the third intersection point of $L_2$ and $C$. The pencil generated by $2L_1+2L_2+L_3+L_4$ and $2C$ is a $2$-Halphen pencil, and we have to blow up three times over $p_1$, $p_2$, two times over $p_3$ and once over $p_4$ to get a $2$-Halphen rational surface of type $D_8$. In fact, we get the following (dual) configuration of curves:
\begin{center}
   \begin{tikzpicture}[scale=.4]
\foreach \x in {1,...,5}
    \draw[thick,xshift=\x cm,fill=black] (\x cm,0) circle (2 mm);
    \draw[xshift=1 cm] (0.5 cm,0 cm) node[anchor=east]  {$2L_2$};
    \draw[xshift=2 cm] (2 cm,-0.2 cm) node[anchor=north]  {$2S_1$};
    \draw[xshift=3 cm] (3 cm,-0.2 cm) node[anchor=north]  {$2S_2$};
    \draw[xshift=4 cm] (4 cm,-0.2 cm) node[anchor=north]  {$2L_1$};
    \draw[xshift=5 cm] (5.5 cm, 0 cm) node[anchor=west]  {$2R_1$};
    \foreach \y in {1,...,4}
    \draw[thick,xshift=\y cm] (\y cm,0) ++(.3 cm, 0) -- +(14 mm,0);
\draw[thick,fill=black] (12 cm, 2) circle (2 mm);
    \draw[thick,fill=black] (12 cm, -2) circle (2 mm);
    \draw[thick,fill=black] (0 cm, 2) circle (2 mm);
    \draw[thick,fill=black] (0 cm, -2) circle (2 mm);
\draw[thick] (10 cm,0) -- (12 cm,2);
\draw[thick] (10 cm,0) -- (12 cm,-2);
\draw[thick] (2 cm, 0 ) -- (0 cm,2);
    \draw[thick] (2 cm, 0 ) -- (0 cm,-2);

\draw (-0.5 cm, 2 cm) node[anchor=east]  {$2G_1$};
\draw (-0.5 cm, -2 cm) node[anchor=east]  {$L_4$};
\draw (12.5 cm, 2 cm) node[anchor=west]  {$T_1$};
\draw (12.5 cm, -2 cm) node[anchor=west]  {$L_3$};
\end{tikzpicture}
\end{center}

We call, from last one to first one, $E_1,E_2,E_3$ the exceptional divisors over $p_1$, $F_1,F_2,F_3$ the exceptional divisors over $p_2$, $G_1,G_2$ the exceptional divisors over $p_3$ and $J$ the exceptional divisor over $p_4$. By using the basis of $\Pic(X)$ given by $\{H,E_i,F_j,G_k,J\}$, we get that the $(-2)$-curves of $X$ are given by
$$R_i=E_{i+1}-E_i,\qquad S_i=F_{i+1}-F_i,\qquad T_1=G_2-G_1,$$
$$L_1=H-E_3-F_3-F_2,\qquad L_2=H-F_3-G_2-J,$$
$$L_3=H-E_3-G_2-G_1,\qquad L_2=H-E_3-E_2-E_1.$$
Moreover we can find the classes of all the $(-1)$-curves of $X$ using a Magma program that can be found \href{https://github.com/excelsofiasco/cox-rings-anticanonical-surfaces/blob/main/Program%20to%20find%20(-1)-curves}{here}. In total, this surface has 18 $(-1)$-curves.

We obtain that the necessary degrees for $R(X)$ are either classes of negative curves or one of the following elements in the Hilbert basis of the nef cone:
\[
C_1=-K_X,\quad C_2=4H-2E_2-2E_3-F_1-F_2-F_3-2G_2-J,
\]
\[
C_3=2H-F_1-F_2-F_3-J.
 \]
 The classes $C_2$ and $C_3$ are conic bundles with only one reducible fiber. Thus $C_1,C_2,C_3$ are necessary
by Remark \ref{necesarios}.

\subsection{A 2-Halphen rational elliptic surface of type \texorpdfstring{$E_7+A_1$}{E7+A1}} \label{Example  2-Halphen E7+A1}

We consider the model given in \cite{Z}*{Example 7.45}. 
Let $D$ be a cubic with a node at $p_1$. We choose a flex point $p_2$, denote the respective inflection line by $L_1$, 
and choose a line $L_2$ through $p_2$ which intersects $D$ at two other points, say $p_3$ and $p_4$. 
Finally, we construct a cubic $C$ through $p_1,\dots,p_4$ such that $C$ intersects $D$ with multiplicity $5$ at $p_2$ 
and intersects $L_1$ with multiplicity $3$ at $p_2$. 
The pencil generated by $D + 2L_1 + L_2$ and $2C$ is a $2$-Halphen pencil.
Blowing up six times over $p_2$  and once over $p_1,p_3,p_4$ we get a $2$-Halphen rational surface of type $E_7+A_1$. 
In fact, we get the following configuration of curves:

\begin{center}
   \begin{tikzpicture}[scale=.4]
\foreach \x in {1,...,7}
    \draw[thick,xshift=\x cm,fill=black] (\x cm,0) circle (2 mm);
    \draw[xshift=1 cm] (1 cm,0.2cm ) node[anchor=south]  {$D$};
    \draw[xshift=2 cm] (2 cm,0.2 cm) node[anchor=south]  {$2R_5$};
    \draw[xshift=3 cm] (3 cm,0.2 cm) node[anchor=south]  {$3R_4$};
    \draw[xshift=4 cm] (4 cm,0.2 cm) node[anchor=south]  {$4R_3$};
    \draw[xshift=5 cm] (5 cm,0.2 cm) node[anchor=south]  {$3R_2$};
    \draw[xshift=6 cm] (6 cm,0.2 cm) node[anchor=south]  {$2R_1$};
    \draw[xshift=7 cm] (7 cm,0.2 cm) node[anchor=south]  {$L_2$};

    \foreach \y in {1,...,6}
    \draw[thick,xshift=\y cm] (\y cm,0) ++(.3 cm, 0) -- +(14 mm,0);

\draw[thick,fill=black] (8 cm, -2 cm) circle (2 mm);
   
    \draw[thick] (8 cm, 0 cm ) -- (8 cm,-2 cm);

\draw (8 cm, -2.5 cm) node[anchor=north]  {$2L_1$};

\end{tikzpicture}
\end{center}

We call, from last one to first one, $E_1,E_2,E_3,E_4,E_5,E_6$ the exceptional divisors over $p_2$, $F_1$ the exceptional divisor over $p_3$, $G_1$ the exceptional divisor over $p_4$ and $H_1$ the exceptional divisor over $p_1$. 

The classes of the $(-2)$-curves, using the basis of $\Pic(X)$ given by $\{H,E_i,F_1,G_1,H_1\}$, are:
$$R_i=E_{i+1}-E_i,\quad L_1:= H-E_6-E_5-E_4,\quad L_2:=H-E_6-F_1-G_1,$$
$$D=3H-E_2-E_3-E_4-E_5-E_6-F_1-G_1-H_1,$$
$$S_1=3H-E_1-E_2-E_3-E_4-E_5-E_6-2G_1-H_1,$$
$$S_2=3H-E_1-E_2-E_3-E_4-E_5-E_6-2F_1-H_1.$$
Thus we can find all the $(-1)$-curves on $X$ with the Magma program that can be found \href{https://github.com/excelsofiasco/cox-rings-anticanonical-surfaces/blob/main/Program%20to%20find%20(-1)-curves}{here}. In total, this surface has $18$ $(-1)$-curves. 
We obtain that the only nef necessary degree of $R(X)$ is $-K_X$.

\subsection{A 2-Halphen rational elliptic surface of type \texorpdfstring{$A_8$}{A8}} \label{Example  2-Halphen A8}

We consider the model given in \cite{Z}*{Example 7.13}. 
Let $L_1,\dots, L_6$ be  six lines intersecting as in the picture below: 
\begin{center}
    \includegraphics[scale=0.3]{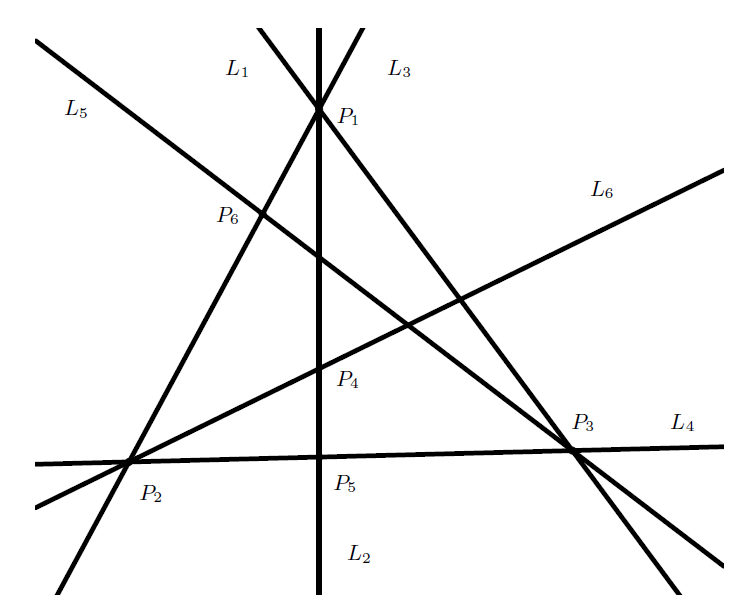}
\end{center}
and such that we can choose a smooth cubic $C$ passing through $p_4, p_5, p_6$ that is tangent with multiplicity 2 to $L_1$ at $p_1$, to $L_5$ at $p_3$ and to $L_6$ at $p_2$. The pencil generated by $L_1+\cdots+L_6$ and $2C$ is a $2$-Halphen pencil, and we have to blow up two times over $p_1,p_2,p_3$ and once over $p_4,p_5,p_6$ to get a $2$-Halphen rational surface of type $A_8$. In fact, we get the following (dual) configuration of curves:
\begin{center}
    \begin{tikzpicture}[scale=.25]
\node[draw=none,minimum size=3cm,regular polygon,regular polygon sides=9] (a) {};
\foreach \x in {1,2,...,9}
  \draw[thick,fill=black] (a.corner \x) circle (2 mm);
\draw[thick] (a.corner 1) -- (a.corner 2) ;
\draw[thick] (a.corner 2) -- (a.corner 3) ;
\draw[thick] (a.corner 3) -- (a.corner 4) ;
\draw[thick] (a.corner 4) -- (a.corner 5) ;
\draw[thick] (a.corner 5) -- (a.corner 6) ;
\draw[thick] (a.corner 6) -- (a.corner 7) ;
\draw[thick] (a.corner 7) -- (a.corner 8) ;
\draw[thick] (a.corner 8) -- (a.corner 9) ;
\draw[thick] (a.corner 9) -- (a.corner 1) ;
\draw (a.corner 1) node[anchor=south]  {$T_1$};
\draw (a.corner 2) node[anchor=south]  {$L_1$};
\draw (a.corner 3) node[anchor=east]  {$L_6$};
\draw (a.corner 4) node[anchor=east]  {$L_5$};
\draw (a.corner 5) node[anchor=north]  {$L_2$};
\draw (a.corner 6) node[anchor=north]  {$R_1$};
\draw (a.corner 7) node[anchor=west]  {$L_3$};
\draw (a.corner 8) node[anchor=west]  {$S_1$};
\draw (a.corner 9) node[anchor=south]  {$L_4$};

\end{tikzpicture}
\end{center}

We call, from last one to first one, $E_1,E_2$ the exceptional divisors over $p_1$, $F_1,F_2$ the exceptional divisors over $p_2$, $G_1,G_2$ the exceptional divisors over $p_3$, $H_1$ the exceptional divisor over $p_4$, $I_1$ the exceptional divisor over $p_5$ and $J_1$ the exceptional divisor over $p_6$. By using the basis of $\Pic(X)$ given by $\{H,E_i,F_j,G_k,H_1,I_1,J_1\}$, we get that the $(-2)$-curves of $X$ are given by
$$R_1=E_2-E_1,\quad S_1=F_2-F_1,\quad T_1=G_2-G_1,$$
$$L_1=H-E_1-E_2-G_2,\quad L_2=H-I_1-H_1-E_2,$$
$$L_3=H-E_2-J_1-F_2,\quad L_4=H-G_2-F_2-I_1.$$
$$L_5=H-J_1-G_1-G_2,\quad L_6=H-F_1-F_2-H_1.$$
Thus we can find the classes of all the $(-1)$-curves of $X$ using the Magma program that can be found \href{https://github.com/excelsofiasco/cox-rings-anticanonical-surfaces/blob/main/Program%20to%20find%20(-1)-curves}{here}. In total, this surface has $24$ $(-1)$-curves.
We get that the nef necessary degrees of $R(X)$ belong to the following list (in the basis above):
\[
\begin{array}{l}
C_1=-K_X,\\
C_2=19H-6E_1-6E_2-2F_1-8F_2-7G_1-7G_2-9H_1-4I_1-5J_1,\\
C_3=9H-3E_1-3E_2-2F_1-2F_2-2G_1-3G_2-5H_1-I_1-4J_1,\\
C_4=10H-3E_1-3E_2-F_1-4F_2-4G_1-4G_2-5H_1-2I_1-2J_1,\\
C_5=11H-4E_1-4E_2-2F_1-2F_2-3G_1-3G_2-H_1-6I_1-5J_1,\\
C_6=7H-E_1-E_2-2F_1-2F_2-G_1-2G_2-3H_1-3I_1-4J_1,\\
C_7=6H-2E_1-2E_2-3F_1-3F_2-2G_1-2G_2-I_1-J_1,\\
C_8=13H-2E_1-2E_2-4F_1-4F_2-3G_1-3G_2-5H_1-6I_1-7J_1,\\
C_9=9H-E_1-4E_2-3F_1-3F_2-4G_1-4G_2-3H_1-2I_1-J_1,\\
C_{10}=17H-2E_1-8E_2-6F_1-6F_2-7G_1-7G_2-5H_1-4I_1-3J_1,\\
C_{11}=5H-2E_1-2E_2-F_1-F_2-G_1-G_2-3I_1-2J_1,\\
C_{12}=13H-4E_1-4E_2-6F_1-6F_2-5G_1-5G_2-H_1-2I_1-3J_1,\\
C_{13}=17H-6E_1-6E_2-4F_1-4F_2-5G_1-5G_2-9H_1-2I_1-7J_1.
\end{array}
\]
Notice that  $C_3,C_4,C_6,C_7,C_9, C_{11}$ are conic bundles with only one reducible fiber. The other classes, except for $C_1$, are twisted cubics contracting all negative curves to a single point. All of them  are necessary by Remark \ref{necesarios}. 

\subsection{A weak del Pezzo surface of Picard number 7} \label{Example  rank 7}

Let us consider the $E_6$ cubic surface 
$$S=\{(x,y,z,w) : xy^2+yw^2+z^3=0\}\subseteq \mathbb{P}^3$$ that was studied by Hassett and Tschinkel in \cite{HT}*{Section 3} and is described by Derenthal in \cite{DD}*{Section 5.6} (in the latter, it is called a type xx surface). As Hassett and Tschinkel show, the Cox ring of $S$ has generators in degree $E_1,\dots,E_7,A_1,A_2,A_3$ where $E_1,\dots,E_6$ are $(-2)$-curves, $E_7$ is a $(-1)$-curve and the extended Dynkin diagram of negative curves of $S$ is 

\begin{center}
    \begin{tikzpicture}

    \draw (0,0) node {$A_1$};
    \draw (-1,1) node {$A_3$};
    \draw (-1,-1) node {$A_2$};
    \draw (0,1) node {$E_7$};
    \draw (1,1) node {$E_4$};
    \draw (2,1) node {$E_5$};
    \draw (1,0) node {$E_1$};
    \draw (2,0) node {$E_3$};
    \draw (3,0) node {$E_6$};
    \draw (2,-1) node {$E_2$};
 \draw (-0.2,-0.2) -- (-0.8,-0.8);
  \draw (-0.2,0.2) -- (-0.8,0.8);
   \draw (0.3,0) -- (0.7,0);
   \draw (1.3,0) -- (1.7,0);
   \draw (2.3,0) -- (2.7,0);
   \draw (-0.7,-1) -- (1.7,-1);
   \draw (-0.7,1) -- (-0.3,1);
    \draw (0.3,1) -- (0.7,1);
   \draw (1.3,1) -- (1.7,1);
     \draw (2.8,-0.2) -- (2.2,-0.8);
  \draw (2.8,0.2) -- (2.2,0.8);
  \draw (-1,0.7) -- (-1,-0.7);
  \draw (0,1) circle (3 mm);
\end{tikzpicture}
\end{center}

By considering the basis of $\Pic(S)$ given by $E_1,\dots,E_7$, we have 
\[
A_3=(2,3,4,4,5,6,3)=-K_S,\ A_1=(0,1,1,2,2,2,2), \ A_2=(1,1,2,3,3,3,3).
\]
Let us write these divisors in a more manageable form: considering the basis of $\Pic(S)$ given by $H,T_1,\dots,T_6$, where $H$ is a general line and $$
    T_1=E_7+E_4,\quad T_2=E_7+E_4+E_5, \quad T_3=E_7+E_4+E_5+E_6 $$ $$
    T_4=E_7+E_4+E_5+E_6+E_3, \quad T_5=E_7+E_4+E_5+E_6+E_3+E_1,\quad T_6=E_7
$$
(which are all the $(-1)$-curves excepting $E_7+E_4+E_5+E_6+E_2$) we have that
$A_1= H-T_5$ and $A_2= H$,
and so the nef necessary degrees of the Cox ring of $S$ are a conic bundle $A_1$, a twisted cubic $A_2$ and the anticanonical divisor $A_3$. 

\subsection{Weak del Pezzo surfaces dominated by extremal jacobian elliptic surfaces}\label{Example  wdp}

Let $X$ be a weak del Pezzo surface such that there is a birational morphism $\pi: Z\to X$, where $Z$ is an extremal rational elliptic surface with a section. By \cite{Y}*{Theorem 3.4} this holds for all weak del Pezzo surfaces with $K_X^2\leq 7$ which are extremal i.e. when the lattice generated by classes of $(-2)$-curves has maximal rank. 
Let $I$ be the ideal of $R(Z)$ generated by the elements of the form $f-1$, where $f$ is the defining element of an integral component of the exceptional divisor of $\pi$ and $1$ has degree $0$.
By the criterion described in \cite{AGL}*{\S 5} 
the Cox ring of $X$ is given by the quotient $R':=R(Z)/I$ if $\dim(R')=\dim(R(X))$ and ${\rm Spec}(R')$ is irreducible. 
Since the Cox rings of many examples of extremal jacobian elliptic surfaces have been computed in \cite{AGL}, this allows one to compute the Cox ring of the weak del Pezzo surfaces dominated by them.

For example, let $Z$ be a jacobian rational elliptic surface with singular fibers of type $II^*II$  
(this is the surface $X_{22}$  in the notation of \cite{AGL}*{Table 1}).
Looking at the matrix of necessary degrees of ${R}(X_{22})$ given in \cite{AGL}*{Table 4} we notice that the class of the unique $(-1)$-curve is in the last column. Thus, setting the last variable equal to $1$, we get a ring in $12$ variables with the following degree matrix

$$\left(\begin{array}{cccc|cccccccc}
1  & 1  & 1  & 3  & 0  & 0  & 0  & 0  & 0  & 0  & 0  & 0  \\
-1  & -1  & 0  & -1  & 1  & 0  & 0  & 0  & 0  & 0  & 0  & 0  \\
-1  & 0  & 0  & -1  & -1  & 1  & 0  & 0  & 0  & 0  & 0  & 0  \\
-1  & 0  & 0  & -1  & 0  & -1  & 1  & 0  & 0  & 0  & 0  & 0  \\
0  & 0  & 0  & -1  & 0  & 0  & -1  & 1  & 0  & 0  & 0  & 0  \\
0  & 0  & 0  & -1  & 0  & 0  & 0  & -1  & 1  & 0  & 0  & 0  \\
0  & 0  & 0  & -1  & 0  & 0  & 0  & 0  & -1  & 1  & 0  & 0  \\
0  & 0  & 0  & -1  & 0  & 0  & 0  & 0  & 0  & -1  & 1  & 0  \\
0  & 0  & 0  & -1  & 0  & 0  & 0  & 0  & 0  & 0  & -1  & 1  \\
\end{array}\right).$$
and with a unique relation: 
$$ T_1T^2_3 + T^3_2 T^2_5 T_6 - T_4T_8T^2_9T^3_{10}T^4_{11}T^5_{12}.$$
Similarly, if $Z$ is the jacobian rational elliptic surface with singular fibers of type  $II^*2I_1$   (this is the surface $X_{211}$ in the notation of \cite{AGL}*{Table 1}), we get that the necessary degrees of ${R}(X)$ are

    $$\left(\begin{array}{cccc|cccccccc}
1  & 1  & 1  & 3  & 0  & 0  & 0  & 0  & 0  & 0  & 0  & 0  \\
-1  & -1  & 0  & -1  & 1  & 0  & 0  & 0  & 0  & 0  & 0  & 0  \\
-1  & 0  & 0  & -1  & -1  & 1  & 0  & 0  & 0  & 0  & 0  & 0  \\
-1  & 0  & 0  & -1  & 0  & -1  & 1  & 0  & 0  & 0  & 0  & 0  \\
0  & 0  & 0  & -1  & 0  & 0  & -1  & 1  & 0  & 0  & 0  & 0  \\
0  & 0  & 0  & -1  & 0  & 0  & 0  & -1  & 1  & 0  & 0  & 0  \\
0  & 0  & 0  & -1  & 0  & 0  & 0  & 0  & -1  & 1  & 0  & 0  \\
0  & 0  & 0  & -1  & 0  & 0  & 0  & 0  & 0  & -1  & 1  & 0  \\
0  & 0  & 0  & -1  & 0  & 0  & 0  & 0  & 0  & 0  & -1  & 1  \\
\end{array}\right)$$

and the ideal of relations is generated by
$$T_1T^2_3 + T^3_2 T^2_5 T_6 - T_4T_8T^2_9T^3_{10}T^4_{11}T^5_{12} + T_1T^2_2T^2_5T^2_6T^2_7T^2_8T^2_9T^2_{10}T^2_{11}T^2_{12}.$$

\begin{bibdiv}
\begin{biblist}

\bib{ACL}{article}{
    AUTHOR = {Artebani, Michela },
    AUTHOR = {Correa Deisler, Claudia }, AUTHOR = {Laface, Antonio },
   TITLE = {Cox rings of {K}3 surfaces of {P}icard number three},
   JOURNAL = {J. Algebra},
    VOLUME = {565},
      YEAR = {2021},
     PAGES = {598--626},
      ISSN = {0021-8693},

}

\bib{ADHL}{book}{
AUTHOR = {Arzhantsev, Ivan},
AUTHOR = {Derenthal, Ulrich},
AUTHOR = {Hausen, J\"urgen},
AUTHOR = {Laface, Antonio},
TITLE = {Cox rings},  
URL={http://www.mathematik.uni-tuebingen.de/~hausen/CoxRings/download.php?name=coxrings.pdf},
}

\bib{AGL}{article}{
  AUTHOR = {Artebani, Michela },
  AUTHOR = {Garbagnati, Alice },
  AUTHOR = {Laface, Antonio },
     TITLE = {Cox rings of extremal rational elliptic surfaces},
   JOURNAL = {Trans. Amer. Math. Soc.},
    VOLUME = {368},
      YEAR = {2016},
    NUMBER = {3},
     PAGES = {1735--1757},
      ISSN = {0002-9947},
}

\bib{AHL}{article}{
  AUTHOR = {Artebani, Michela },
  AUTHOR = {Hausen, Jurgen },
  AUTHOR = {Laface, Antonio },
     TITLE = {On {C}ox rings of {K}3 surfaces},
   JOURNAL = {Compos. Math.},
    VOLUME = {146},
      YEAR = {2010},
    NUMBER = {4},
     PAGES = {964--998},
      ISSN = {0010-437X},
       URL = {https://doi.org/10.1112/S0010437X09004576},}

\bib{ArLa}{article}{
    AUTHOR = {Artebani, Michela},
    AUTHOR = {Laface, Antonio},
     TITLE = {Cox rings of surfaces and the anticanonical {I}itaka
              dimension},
   JOURNAL = {Adv. Math.},
    VOLUME = {226},
      YEAR = {2011},
    NUMBER = {6},
     PAGES = {5252--5267},
      ISSN = {0001-8708},
       DOI = {10.1016/j.aim.2011.01.007},
       URL = {http://dx.doi.org/10.1016/j.aim.2011.01.007},
}

\bib{B}{book}{
    AUTHOR = {Beauville, Arnaud},
     TITLE = {Complex algebraic surfaces},
    SERIES = {London Mathematical Society Student Texts},
    VOLUME = {34},
   EDITION = {Second},
      NOTE = {Translated from the 1978 French original by R. Barlow, with
              assistance from N. I. Shepherd-Barron and M. Reid},
 PUBLISHER = {Cambridge University Press},
   ADDRESS = {Cambridge},
      YEAR = {1996},
     PAGES = {x+132},
      ISBN = {0-521-49510-5; 0-521-49842-2},
       DOI = {10.1017/CBO9780511623936},
       URL = {http://dx.doi.org/10.1017/CBO9780511623936},
}

\bib{Magma}{article}{
    AUTHOR = {Bosma, Wieb},
    AUTHOR = {Cannon, John},
    AUTHOR = {Playoust, Catherine},
     TITLE = {The {M}agma algebra system. {I}. {T}he user language},
      NOTE = {Computational algebra and number theory (London, 1993)},
   JOURNAL = {J. Symbolic Comput.},
    VOLUME = {24},
      YEAR = {1997},
    NUMBER = {3-4},
     PAGES = {235--265}
}

\bib{BP}{incollection}{
    AUTHOR = {Batyrev, Victor V.},
    AUTHOR = {Popov, Oleg N.},
     TITLE = {The {C}ox ring of a del {P}ezzo surface},
 BOOKTITLE = {in Arithmetic of higher-dimensional algebraic varieties ({P}alo
              {A}lto, {CA}, 2002). Progr. Math. vol.226,},
   SERIES = {},
    VOLUME = {226},
     PAGES = {85--103},
 PUBLISHER = {Birkh\"{a}user Boston, Boston, MA},
    YEAR = {2004},
       URL = {https://doi.org/10.1007/978-0-8176-8170-8_5},   
}

\bib{barthpeters}{book}{
AUTHOR = {Barth, Wolf Paul }, 
AUTHOR = {Peters, Chris },
AUTHOR = {Van de Ven, Antonius },
     TITLE = {Compact complex surfaces},
    SERIES = {Ergebnisse der Mathematik und ihrer Grenzgebiete (3) [Results
              in Mathematics and Related Areas (3)]},
    VOLUME = {4},
 PUBLISHER = {Springer-Verlag, Berlin},
      YEAR = {1984},
     PAGES = {x+304},
      ISBN = {3-540-12172-2},}

\bib{DD}{article}{
    AUTHOR = {Derenthal, Ulrich },
     TITLE = {Geometrie universeller Torsore},
   JOURNAL = {Doctoral thesis, Fakultät für Mathematik und Informatik, Georg-August-Universität Göttingen,},
   NOTE = {available at \url{http://dx.doi.org/10.53846/goediss-2581}},
 YEAR = {2006},
}

\bib{DDD}{article}{,
    AUTHOR = {Derenthal, Ulrich},
     TITLE = {Singular del {P}ezzo surfaces whose universal torsors are
              hypersurfaces},
   JOURNAL = {Proc. Lond. Math. Soc. (3)},
    VOLUME = {108},
      YEAR = {2014},
    NUMBER = {3},
     PAGES = {638--681},
      ISSN = {0024-6115},
}

\bib{Har}{article}{
    AUTHOR = {Harbourne, Brian},
     TITLE = {Anticanonical rational surfaces},
   JOURNAL = {Trans. Amer. Math. Soc.},
    VOLUME = {349},
      YEAR = {1997},
    NUMBER = {3},
     PAGES = {1191--1208},
      ISSN = {0002-9947},
       DOI = {10.1090/S0002-9947-97-01722-4},
       URL = {http://dx.doi.org/10.1090/S0002-9947-97-01722-4},
}

\bib{HL}{article}{
    AUTHOR = {Harbourne, Brian},
    AUTHOR = {Lang, William E.},
     TITLE = {Multiple fibers on rational elliptic surfaces},
   JOURNAL = {Trans. Amer. Math. Soc.},
    VOLUME = {307},
      YEAR = {1988},
    NUMBER = {1},
     PAGES = {205--223},
      ISSN = {0002-9947,1088-6850},
       DOI = {10.2307/2000759},
       URL = {https://doi.org/10.2307/2000759},}

 \bib{HT}{article}{ 
    AUTHOR = {Hassett, Brendan}, 
    AUTHOR = {Tschinkel, Yuri },
     TITLE = {Universal torsors and {C}ox rings},
 BOOKTITLE = {Arithmetic of higher-dimensional algebraic varieties ({P}alo
              {A}lto, {CA}, 2002)},
    SERIES = {Progr. Math.},
    VOLUME = {226},
     PAGES = {149--173},
 PUBLISHER = {Birkh\"{a}user Boston, Boston, MA},
      YEAR = {2004},
       URL = {https://doi.org/10.1007/978-0-8176-8170-8_10},
}

 \bib{HZ}{article}{
              title={On the GIT stability of linear systems of hypersurfaces in projective
space},
              author={Hattori, Masafumi},
              author={Zanardini, Aline},
              year={2022},
              eprint={2212.09364},
          }

  \bib{T}{article}{ AUTHOR = {Karayayla, Tolga},
     TITLE = {The classification of automorphism groups of rational elliptic
              surfaces with section},
   JOURNAL = {Adv. Math.},

    VOLUME = {230},
      YEAR = {2012},
    NUMBER = {1},
     PAGES = {1--54},
}

\bib{LT}{article}{
      title={On minimal rational elliptic surfaces}, 
      author={Laface, Antonio},
      author = {Testa, Damiano},
      year={2015},
      eprint={https://arxiv.org/abs/1502.00275},
}

\bib{MiPe}{article}{
    AUTHOR = {Miranda, Rick},
    AUTHOR = {Persson, Ulf},
     TITLE = {On extremal rational elliptic surfaces},
   JOURNAL = {Math. Z.},
    VOLUME = {193},
      YEAR = {1986},
    NUMBER = {4},
     PAGES = {537--558},
      ISSN = {0025-5874},
       DOI = {10.1007/BF01160474},
       URL = {http://dx.doi.org/10.1007/BF01160474},
}

\bib{Ot}{article}{AUTHOR = {Ottem, John Christian},
     TITLE = {Cox rings of {K}3 surfaces with {P}icard number 2},
   JOURNAL = {J. Pure Appl. Algebra},
    VOLUME = {217},
      YEAR = {2013},
    NUMBER = {4},
     PAGES = {709--715},
      ISSN = {0022-4049},
       URL = {https://doi.org/10.1016/j.jpaa.2012.09.016},
}

 \bib{TVA}{article}{
author = {Testa, Damiano},
author ={Várilly-Alvarado, Anthony},
author = {Velasco, Mauricio},
year = {2009},
pages = {},
title = {Big rational surfaces},
volume = {351},
journal = {Mathematische Annalen},
doi = {10.1007/s00208-010-0590-7}
}

  \bib{Y}{article}{
    AUTHOR = {Ye, Qiang},
     TITLE = {On {G}orenstein log del {P}ezzo surfaces},
   JOURNAL = {Japan. J. Math. (N.S.)},
    VOLUME = {28},
      YEAR = {2002},
    NUMBER = {1},
     PAGES = {87--136},
      ISSN = {0289-2316},
}

\bib{Z}{article}{
     AUTHOR = {Zanardini, Aline},
     TITLE = {Explicit constructions of {H}alphen pencils of index two},
   JOURNAL = {Rocky Mountain J. Math.},
    VOLUME = {52},
      YEAR = {2022},
    NUMBER = {4},
     PAGES = {1485--1522},
      ISSN = {0035-7596},
       URL = {https://doi.org/10.1216/rmj.2022.52.1485},
}
 
\end{biblist}
\end{bibdiv}

\end{document}